\theoremstyle{plain}
\newtheorem{theorem}{Theorem}
\newtheorem{lemma}{Lemma}
\newtheorem{corollary}{Corollary}
\newtheorem{conjecture}{Conjecture}
\newtheorem{remark}{Remark}
\theoremstyle{definition} 
\numberwithin{equation}{section}
\numberwithin{figure}{section}
\numberwithin{table}{section}
\begin{document}


\title{On the Approximation of Laplacian Eigenvalues in Graph Disaggregation}

\author{
\name{ Xiaozhe Hu\textsuperscript{a}, John C. Urschel\textsuperscript{b,c,d$\ast$}\thanks{$^\ast$Corresponding author. Email: urschel@mit.edu},
Ludmil T. Zikatanov \textsuperscript{d,e}
}
\affil{ \textsuperscript{a}Department of Mathematics, Tufts University, Medford, MA, USA
\textsuperscript{b}Baltimore Ravens, NFL, Owings Mills, MD, USA;
  \textsuperscript{c}Department of Mathematics, Massachusetts Institute of Technology,
  Cambridge, MA, USA.
  \textsuperscript{d}Department of Mathematics, Penn State University,
  University Park, PA, USA;
  \textsuperscript{e}Institute for Mathematics and 
  Informatics, Bulgarian Academy of Sciences, Sofia, Bulgaria
}  
}

\maketitle

\begin{abstract}
Graph disaggregation is a technique used to address the high cost of computation for power law graphs on parallel processors. The few high-degree vertices are broken into multiple small-degree vertices, in order to allow for more efficient computation in parallel. In particular, we consider computations involving the graph Laplacian, which has significant applications, including diffusion mapping and graph partitioning, among others. We prove results regarding the spectral approximation of the Laplacian of the original graph by the Laplacian of the disaggregated graph. In addition, we construct an alternate disaggregation operator whose eigenvalues interlace those of the original Laplacian. Using this alternate operator, we construct a uniform preconditioner for the original graph Laplacian.
\end{abstract}

\begin{keywords}
Spectral Graph Theory; Graph Laplacian; Disaggregation; Spectral Approximation; Preconditioning
\end{keywords}

\begin{classcode} 05C85; 65F15; 65F08; 68R10 \end{classcode}

\section{Introduction}
A variety of real-world graphs, including web networks
\cite{MR2091634}, social networks \cite{MR2282139}, and bioinformatics
networks \cite{Ji04agraph}, exhibit a degree power law. Namely, the
fraction of nodes of degree $k$, denoted by $P(k)$, follows a power
distribution of the form $P(k) \sim k^{- \gamma}$, where $\gamma$ is
typically in the range $2< \gamma <3$. Networks of this variety are
often referred to as scale-free networks. The pairing of a few
high-degree vertices with many low-degree vertices on large scale-free networks makes 
computations such as Laplacian matrix-vector products and solving linear and eigenvalue equations challenging. The computation of the
minimal nontrivial eigenpair can become prohibitively expensive. This eigenpair has many important applications, such as diffusion mapping and graph partitioning
\cite{CPE:CPE4330060203,Nadler05diffusionmaps,urschel2015cascadic,urschel2014spectral}.

Breaking the few high degree nodes into multiple smaller degree nodes
is a way to address this issue,
especially when large-scale parallel computers are available. This
technique, called graph disaggregation, was introduced by Kuhlemann and Vassilevski \cite{Kuhlemann.V;Vassilevski.P2013a,d2015compatible}. In  
this process, each of the high-degree vertices of the network is
replaced by a graph, such as a cycle or a clique, where each incident
edge of the original node now connects to a node of the cycle or
clique (see Figure~\ref{fig:dis}). 

Independently, Lee, Peng, and Spielman investigated the
concept of graph disaggregation, referred to as vertex splitting, in the setting of combinatorial
spectral sparsifiers ~\cite{lee2015sparsified}. They proved results for graphs
disaggregated from complete graphs and expanders, and used the Schur
complement of the disaggregated Laplacian with respect to the
disaggregated vertices to approximate the original Laplacian.  The
basic motivating assumption in such constructions is that the spectral
structure of the graph Laplacian induced by the disaggregated graph
approximates the spectral structure of the original graph well.  

In \cite{Kuhlemann.V;Vassilevski.P2013a,d2015compatible} Kuhlemann and Vassilevski took a numerical approach. We extend, expand upon, and prove precise and rigorous theoretical results regarding this technique. First, we look at the case of a single disaggregated vertex and establish bounds on the error in spectral approximation with respect to the Laplacians of the original and disaggregated graph, as well as results related to the Cheeger constant. We investigate a conjecture made in \cite{Kuhlemann.V;Vassilevski.P2013a} and give strong theoretical evidence that it does not hold in general. Then, we treat the more general case of disaggregation of multiple vertices and prove analogous results. Finally, we construct an alternative disaggregation operator whose eigenvalues interlace with those of the original graph
Laplacian, and, hence, provide excellent approximation to the
spectrum of the latter. We then use this new disaggregation operator to construct a uniform preconditioner for the graph
Laplacian of the original graph. We prove that the
preconditioned graph Laplacian can be made arbitrarily close to the identity operator 
if we require that the weights of the internal disaggregated edges are
sufficiently large.

 \begin{figure}[]
\begin{center}
\includegraphics[width=0.3\textwidth]{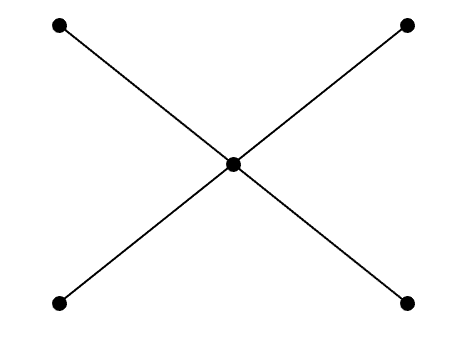}
\includegraphics[width=0.3\textwidth]{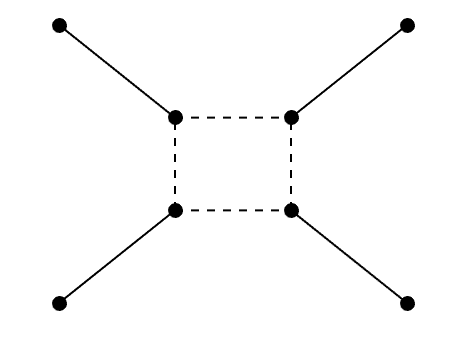}
\includegraphics[width=0.3\textwidth]{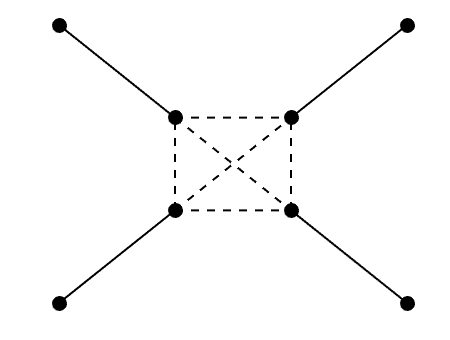}
\caption{Example of disaggregation: original graph (left); disaggregate using cycle (middle); disaggregate using clique (right).}
\label{fig:dis}
\end{center}
\end{figure}

\section{Single Vertex Disaggregation}

Consider a weighted, connected, undirected graph
$\mathsf{G}=(V, E, \omega)$, $| V | = n$. Let $e = (i,j)$ denote an
edge that connects vertices $i$ and $j$, and
$\langle \cdot, \cdot \rangle$ and $\| \cdot \|$ denote the standard
$\ell^2$-inner product and the corresponding induced norm. The
associated weighted graph Laplacian $A \in \mathbb{R}^{n\times n}$ is
given by
\[
\langle A \bm{u} ,\bm{v} \rangle= \sum_{e=(i,j)\in E} \omega_e(u_i-u_j)(v_i-v_j), \quad
\omega_e = (-a_{ij}), 
\]
where we denote the $(i,j)$-{th} element of $A$ by $a_{ij}$.  Without
loss of generality, let us disaggregate the last vertex $\mathsf{v}_n$
of the graph $\mathsf{G}$. Then, the Laplacian can be
written in the following block form
$$
A = 
\begin{pmatrix} 
A_0 & - \bm{a}_{n} \\ 
 - \bm{a}_{n}^T & a_{nn} 
\end{pmatrix},
$$
where $a_{nn}$ is the degree of $\mathsf{v}_n$. Here, we assume that the graph is simply connected and the associated Laplacian $A$ has eigenvalues 
$$
0 = \lambda_1(A) < \lambda_2 (A) \le \cdots \le \lambda_n (A)
$$ 
and corresponding eigenvectors 
$$
\bm{1}_n=\bm{\varphi}^{(1)}(A) , \bm{\varphi}^{(2)}(A), \cdots, \bm{\varphi}^{(n)}(A), \qquad \text{where} \ \bm{1}_n=(\underbrace{1,\cdots,1}_n)^T.
$$
The eigenpair $\left( \lambda_2(A), \bm{\varphi}^{(2)}(A) \right)$ has special significance, and therefore $\lambda_2(A)$ is referred to as the {\it algebraic connectivity}, denoted $a(\mathsf{G})$, and $\bm{\varphi}^{(2)}(A)$ is referred to as the {\it Fiedler vector}.

We can also write a given nontrivial eigenpair $(\lambda(A), \bm{\varphi}(A))$, $\lambda(A) \ne 0$, $\| \bm{\varphi}(A) \| =1$, in block notation, namely
$$ 
\bm{\varphi}(A) = 
\begin{pmatrix} 
\bm{\varphi}_0 \\ 
\varphi_n 
\end{pmatrix}.
$$
We have the relations
 \begin{eqnarray*} 
 \langle \bm{\varphi}_0, \bm{1}_{n_0} \rangle + \varphi_n &=& 0,  \\
  A_0 \bm{\varphi}_0 - \varphi_n \bm{a}_{n} &=& \lambda (A) \bm{\varphi}_0, \\
a_{nn} \varphi_n  - \bm{a}_{n}^T \bm{\varphi}_0 &= &\lambda (A) \varphi_n,
\end{eqnarray*}
where $n_0 = n-1$. Suppose that the vertex $\mathsf{v}_n$ is disaggregated into $d$ vertices, with an unspecified connected structure between the disaggregated elements. We will denote this graph by $\mathsf{G}_D$. This induces a disaggregated graph Laplacian $A_D \in \mathbb{R}^{N \times N}$, $N = n_0 + d$, with eigenvalues $0 = \lambda_1(A_D) < \lambda_2 (A_D) \le \cdots \le \lambda_N (A_D)$ and corresponding eigenvectors $\bm{1}_N = \bm{\varphi}^{(1)}(A_D) , \bm{\varphi}^{(2)}(A_D), \cdots, \bm{\varphi}^{(N)}(A_D)$. We can write $A_D$ in block form
$$ A_D = 
\begin{pmatrix} 
A_0 & - A_{0n} \\ 
-A_{0n}^T & A_n \end{pmatrix}.$$
We have the relations
\begin{align*}
 a_{nn} &= \bm{a}_{n}^T \bm{1}_{n_0}, \\
 A_{0n}^T \bm{1}_{n_0} &=   A_n \bm{1}_d, \\
 \bm{a}_{n} &= A_0 \bm{1}_{n_0} = A_{0n} \bm{1}_d.
\end{align*}
Let us introduce the prolongation operator $P: \mathbb{R}^n \rightarrow \mathbb{R}^N$,
\begin{equation} \label{def:P}
P = \begin{pmatrix}
I_{n_0 \times n_0}  & 0\\
0 & \bm{1}_d
\end{pmatrix}.
\end{equation}

The following result is immediate.

\begin{lemma}\label{immediate}
Let $A$ and $A_D$ be the graph Laplacian of the original graph $\mathsf{G}$ and the disaggregated and simply connected graph $\mathsf{G}_D$, respectively.  If $P$ is defined as \eqref{def:P}, then we have
$$A = P^T A_D P.$$
\end{lemma}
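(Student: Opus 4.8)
The plan is to establish the identity by a direct block computation, using only the three structural relations recorded just above the statement. Writing $A_D$ and $P$ in their block forms, I would first compute the product $A_D P$, which amounts to right-multiplying the second block column of $A_D$ by $\bm{1}_d$, giving
$$
A_D P = \begin{pmatrix} A_0 & -A_{0n}\bm{1}_d \\ -A_{0n}^T & A_n \bm{1}_d \end{pmatrix}.
$$
Left-multiplying by $P^T$ then collapses the last $d$ rows by summing them, producing the $2 \times 2$ block matrix
$$
P^T A_D P = \begin{pmatrix} A_0 & -A_{0n}\bm{1}_d \\ -\bm{1}_d^T A_{0n}^T & \bm{1}_d^T A_n \bm{1}_d \end{pmatrix}.
$$

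The remaining work is to match each of these four blocks against the corresponding block of $A = \begin{pmatrix} A_0 & -\bm{a}_n \\ -\bm{a}_n^T & a_{nn}\end{pmatrix}$. The top-left block is already $A_0$, so nothing is needed there. For the two off-diagonal blocks I would invoke the relation $\bm{a}_n = A_{0n}\bm{1}_d$: this immediately gives $-A_{0n}\bm{1}_d = -\bm{a}_n$ in the top-right position, and taking transposes gives $-\bm{1}_d^T A_{0n}^T = -(A_{0n}\bm{1}_d)^T = -\bm{a}_n^T$ in the bottom-left position.

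The only block that is not a one-step substitution is the bottom-right scalar, where I must show $\bm{1}_d^T A_n \bm{1}_d = a_{nn}$; this is the step I expect to require the most care, since it chains two of the relations together. Using $A_n \bm{1}_d = A_{0n}^T \bm{1}_{n_0}$ first, I get $\bm{1}_d^T A_n \bm{1}_d = \bm{1}_d^T A_{0n}^T \bm{1}_{n_0} = (A_{0n}\bm{1}_d)^T \bm{1}_{n_0} = \bm{a}_n^T \bm{1}_{n_0}$, and then the relation $a_{nn} = \bm{a}_n^T \bm{1}_{n_0}$ closes the computation. With all four blocks matched, $P^T A_D P = A$ follows. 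Conceptually, these three relations simply encode that $A$ and $A_D$ are both Laplacians (zero row sums) and that disaggregation redistributes, without creating or destroying, the edge weight incident to $\mathsf{v}_n$; the identity is therefore the precise algebraic expression of the fact that $P$ restores the disaggregated vertex by identifying its $d$ copies.
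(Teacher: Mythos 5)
Your proof is correct: the paper states this lemma without any proof (``The following result is immediate''), and your direct block computation, using exactly the three row-sum relations recorded just above the statement, is precisely the verification the paper intends. All four block identities check out, including the only nontrivial one, the bottom-right scalar $\bm{1}_d^T A_n \bm{1}_d = a_{nn}$, which you correctly obtain by chaining $A_n \bm{1}_d = A_{0n}^T \bm{1}_{n_0}$, $\bm{a}_n = A_{0n}\bm{1}_d$, and $a_{nn} = \bm{a}_n^T \bm{1}_{n_0}$.
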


We aim to show that the algebraic connectivity of $A_D$ is bounded away from the algebraic connectivity of the original graph $A$. To do so, suppose we have an eigenpair $(\lambda, \bm{\varphi})$ of the Laplacian of the original graph $\mathsf{G}$. We prolongate the eigenvector $\bm{\varphi}$ to the disaggregated graph $\mathsf{G}_D$ and obtain an approximate eigenvector by the procedure
\begin{equation}\label{def:hat-varphi}
\widetilde{\bm{\varphi}} = P \bm{\varphi} - s \bm{1}_N = 
\begin{pmatrix}
\bm{\varphi}_0 \\
\varphi_n \bm{1}_d
\end{pmatrix}
- s \bm{1}_N, \quad \text{where} \ s = \frac{d-1}{N} \varphi_n.
\end{equation}
This gives $\langle \widetilde{\bm{\varphi}}, \bm{1}_N \rangle = 0$.


We consider $\widetilde{\bm{\varphi}}$ to be an approximation of
$\bm{\varphi}$ on the non-trivial eigenspace of the disaggregated
operator $A_D$. We have the following relation between the eigenvalue
$\lambda$ of $A$ and the Rayleigh quotient of
$\widetilde{\bm{\varphi}}$ with respect to $A_D$.

\begin{lemma} \label{lem:rq-relation}
Let $(\lambda, \bm{\varphi})$, $\|\bm{\varphi}\| = 1$, be an eigenpair of the graph Laplacian $A$ associated with a simply connected graph $\mathsf{G}$, and $\widetilde{\bm{\varphi}}$ be defined by \eqref{def:hat-varphi}. We have
$$\mathrm{RQ}(\widetilde{\bm{\varphi}}):=\frac{ \langle  A_D \widetilde{\bm{\varphi}} , \widetilde{\bm{\varphi}} \rangle}{  \langle\widetilde{\bm{\varphi}} , \widetilde{\bm{\varphi}} \rangle } = \frac{ \lambda } { 1 + \tfrac{(d -1)n}{N} \varphi_n^2 } .$$
\end{lemma}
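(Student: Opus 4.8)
The plan is to compute the numerator and the denominator of $\mathrm{RQ}(\widetilde{\bm{\varphi}})$ separately, exploiting two structural facts: that the constant vector lies in the kernel of the disaggregated Laplacian, so $A_D \bm{1}_N = 0$, and the factorization $A = P^T A_D P$ supplied by Lemma~\ref{immediate}.

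For the numerator I would first note that, because $A_D \bm{1}_N = 0$, the shift by $s\,\bm{1}_N$ appearing in the definition~\eqref{def:hat-varphi} of $\widetilde{\bm{\varphi}}$ is invisible to $A_D$. Expanding $\langle A_D \widetilde{\bm{\varphi}}, \widetilde{\bm{\varphi}} \rangle$ and using the symmetry of $A_D$, every term carrying a factor $\bm{1}_N$ vanishes, so what survives is $\langle A_D P \bm{\varphi}, P \bm{\varphi} \rangle = \langle P^T A_D P \bm{\varphi}, \bm{\varphi} \rangle = \langle A \bm{\varphi}, \bm{\varphi} \rangle$ by Lemma~\ref{immediate}. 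Since $(\lambda, \bm{\varphi})$ is an eigenpair with $\|\bm{\varphi}\| = 1$, this equals $\lambda$. Hence the numerator is exactly $\lambda$, independent of the shift $s$.

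For the denominator I would expand $\langle \widetilde{\bm{\varphi}}, \widetilde{\bm{\varphi}} \rangle = \|P\bm{\varphi}\|^2 - 2s\,\langle P\bm{\varphi}, \bm{1}_N \rangle + s^2 N$. The first term evaluates to $\|\bm{\varphi}_0\|^2 + d\varphi_n^2 = 1 + (d-1)\varphi_n^2$, using $\|\bm{\varphi}\|^2 = \|\bm{\varphi}_0\|^2 + \varphi_n^2 = 1$. For the cross term, the relation $\langle \bm{\varphi}_0, \bm{1}_{n_0} \rangle + \varphi_n = 0$ gives $\langle P\bm{\varphi}, \bm{1}_N \rangle = \langle \bm{\varphi}_0, \bm{1}_{n_0} \rangle + d\varphi_n = (d-1)\varphi_n$. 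Substituting $s = \tfrac{d-1}{N}\varphi_n$, the two $\varphi_n$-dependent contributions combine to $-\tfrac{(d-1)^2}{N}\varphi_n^2$, leaving $\langle \widetilde{\bm{\varphi}}, \widetilde{\bm{\varphi}} \rangle = 1 + (d-1)\varphi_n^2\bigl(1 - \tfrac{d-1}{N}\bigr)$.

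The final step is the identity $N - (d-1) = n$, immediate from $N = n_0 + d = (n-1) + d$, which collapses the factor $1 - \tfrac{d-1}{N} = \tfrac{n}{N}$ and yields the denominator $1 + \tfrac{(d-1)n}{N}\varphi_n^2$, completing the formula. This computation is essentially bookkeeping and presents no genuine obstacle; the only two points requiring care are recognizing that the kernel property $A_D\bm{1}_N = 0$ makes the numerator independent of $s$, and that the particular choice of $s$ (the one enforcing $\langle \widetilde{\bm{\varphi}}, \bm{1}_N \rangle = 0$) is precisely what produces the clean cancellation in the denominator.
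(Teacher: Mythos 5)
Your proposal is correct and takes essentially the same route as the paper's own proof: both compute the numerator via $A_D\bm{1}_N = 0$ and the factorization $A = P^T A_D P$ to get exactly $\lambda$, and both expand the denominator using $\langle \bm{\varphi}_0, \bm{1}_{n_0}\rangle + \varphi_n = 0$ and the identity $N-(d-1)=n$ to obtain $1 + \tfrac{(d-1)n}{N}\varphi_n^2$. The algebra in your write-up matches the paper's line by line, so there is nothing to add.
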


\begin{proof}
We have
\begin{align*}
 \langle  \widetilde{\bm{\varphi}} , \widetilde{\bm{\varphi}} \rangle &= \langle P \bm{\varphi}  - s \bm{1}_N,  P \bm{\varphi}  - s \bm{1}_N \rangle  = \langle P \bm{\varphi} ,  P \bm{\varphi} \rangle - 2s  \langle P \bm{\varphi} , \bm{1}_N \rangle + s^2 \langle \bm{1}_N, \bm{1}_N  \rangle \\
 & = \langle \bm{\varphi_0}, \bm{\varphi}_0 \rangle + \varphi_n^2 \langle  \bm{1}_d, \bm{1}_d \rangle - 
 2s \left(  \langle \bm{\varphi}_0, \bm{1}_{n_0} \rangle  + \varphi_n \langle \bm{1}_d, \bm{1}_d  \rangle \right) + s^2N   \\
 & =  \langle \bm{\varphi_0}, \bm{\varphi}_0 \rangle + \varphi_n^2 + (d-1) \varphi_n^2  - 2s (d-1) \varphi_n + s^2 N  \\
 & = 1 + \left[ (d-1) - 2\frac{(d-1)^2}{N} + \frac{(d-1)^2}{N} \right] \varphi_n^2  \\
 & = 1 + \frac{(d-1)n}{N} \varphi_n^2 
 \end{align*}
 and
  \begin{eqnarray*}
  \langle {A_D}  \widetilde{\bm{\varphi}} , \widetilde{\bm{\varphi}}\rangle&=&
\langle  A_D (P \bm{\varphi}  - s \bm{1}_N),  P \bm{\varphi}  - s \bm{1}_N \rangle \\
&=& \langle A_D P \bm{\varphi} , P \bm{\varphi}\rangle - 2s\langle A_D  \bm{1}_N  , P\bm{\varphi}\rangle + s^2 \langle A_D \bm{1}_N, \bm{1}_N  \rangle  \\
  & = & \langle P^TA_D P \bm{\varphi} , \bm{\varphi}\rangle  = \langle A \bm{\varphi}, \bm{\varphi} \rangle = \lambda.
 \end{eqnarray*}
This completes the proof.
\end{proof}

The following result quickly follows by applying Lemma \ref{lem:rq-relation} to the Fielder vector.

\begin{theorem} \label{thm:algebraic-connectivity}
Let $\bm{\varphi} = (\bm{\varphi}_0, \varphi_n)^T$, $\| \bm{\varphi} \| = 1$, be the Fiedler vector of the graph Laplacian $A$ associated with a simply connected graph $\mathsf{G}$. Let $A_D$ be the graph Laplacian corresponding to the disaggregated and simply connected graph $\mathsf{G}_D$ resulting from disaggregating one vertex into $d>1$ vertices. We have
$$\frac{a(\mathsf{G})}{a(\mathsf{G}_D)} \ge  1 + \frac{(d -1)n}{N} \varphi_n^2. $$
\end{theorem}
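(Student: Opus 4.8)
The plan is to apply Lemma~\ref{lem:rq-relation} with $\bm{\varphi}$ taken to be the Fiedler vector, so that $\lambda = \lambda_2(A) = a(\mathsf{G})$, and then to bound the Rayleigh quotient appearing in that lemma from below by $a(\mathsf{G}_D)$ using the variational characterization of the second eigenvalue of $A_D$.

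First I would observe that the test vector $\widetilde{\bm{\varphi}}$ produced by \eqref{def:hat-varphi} lies in the correct subspace: by construction $\langle \widetilde{\bm{\varphi}}, \bm{1}_N \rangle = 0$, so $\widetilde{\bm{\varphi}}$ is orthogonal to the trivial (constant) eigenvector $\bm{\varphi}^{(1)}(A_D) = \bm{1}_N$ of $A_D$. Moreover, the denominator computed in the proof of Lemma~\ref{lem:rq-relation} equals $1 + \tfrac{(d-1)n}{N}\varphi_n^2 \ge 1 > 0$, so $\widetilde{\bm{\varphi}} \ne \bm{0}$ and its Rayleigh quotient is well defined. This orthogonality is precisely what the shift by $s\bm{1}_N$ in \eqref{def:hat-varphi} was arranged to guarantee.

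Next I would invoke the Courant--Fischer (min-max) theorem for the symmetric positive semidefinite matrix $A_D$. Since its smallest eigenvalue is $0$ with eigenvector $\bm{1}_N$, the second smallest eigenvalue admits the characterization
$$a(\mathsf{G}_D) = \lambda_2(A_D) = \min_{\substack{\bm{w} \neq \bm{0} \\ \langle \bm{w}, \bm{1}_N \rangle = 0}} \frac{\langle A_D \bm{w}, \bm{w} \rangle}{\langle \bm{w}, \bm{w} \rangle}.$$
Because $\widetilde{\bm{\varphi}}$ is an admissible competitor in this minimization, we obtain $a(\mathsf{G}_D) \le \mathrm{RQ}(\widetilde{\bm{\varphi}})$.

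Finally, substituting the value of $\mathrm{RQ}(\widetilde{\bm{\varphi}})$ from Lemma~\ref{lem:rq-relation} and rearranging yields
$$a(\mathsf{G}_D) \le \frac{a(\mathsf{G})}{1 + \tfrac{(d-1)n}{N}\varphi_n^2}, \qquad \text{hence} \qquad \frac{a(\mathsf{G})}{a(\mathsf{G}_D)} \ge 1 + \frac{(d-1)n}{N}\varphi_n^2,$$
as claimed. The argument is essentially a one-line consequence of Lemma~\ref{lem:rq-relation}, so I expect no substantive obstacle; the only point requiring care is confirming that $\widetilde{\bm{\varphi}}$ truly satisfies the orthogonality constraint defining the feasible set for the second eigenvalue and that it is nonzero, both of which follow directly from the computations already carried out in the proof of that lemma.
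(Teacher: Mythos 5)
Your proof is correct and is essentially identical to the paper's: both apply Lemma~\ref{lem:rq-relation} to the Fiedler vector and use the orthogonality $\langle \widetilde{\bm{\varphi}}, \bm{1}_N \rangle = 0$ together with the variational characterization of $\lambda_2(A_D)$ to conclude $a(\mathsf{G}_D) \le \mathrm{RQ}(\widetilde{\bm{\varphi}})$. The only difference is that you spell out the Courant--Fischer step and the nonvanishing of $\widetilde{\bm{\varphi}}$ explicitly, which the paper leaves implicit.
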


\begin{proof}
Noting that $\widetilde{\bm{\varphi}}$ is orthogonal to $\bm{1}_N$, we have
\begin{align*}
a(\mathsf{G}_D) & \leq \frac{ \langle  A_D \widetilde{\bm{\varphi}} , \widetilde{\bm{\varphi}} \rangle}{  \langle\widetilde{\bm{\varphi}} , \widetilde{\bm{\varphi}} \rangle }  =  \frac{ \lambda } { 1 + \tfrac{(d -1)n}{N} \varphi_n^2 }  =  \frac{ a(\mathsf{G}) } { 1 + \tfrac{(d -1)n}{N} \varphi_n^2 },
\end{align*}
which completes the proof.
\end{proof}

If the characteristic value of the disaggregated vertex is non-zero,
then the algebraic connectivity of the disaggregated graph stays
bounded away from that of the original graph, independent of the
structure of $A_n$. Therefore, as the weight on the internal edges
approaches infinity, the approximation stays bounded away. 

In \cite{Kuhlemann.V;Vassilevski.P2013a}, the authors made the following conjecture.

\begin{conjecture} \label{wrongconject}
Under certain conditions the Laplacian eigenvalues of the graph
  Laplacian of the disaggregated graph approximate the eigenvalues of
  the graph Laplacian of the original graph, provided that the weight
  on the internal edges of the disaggregation is chosen to be large
  enough.
\end{conjecture}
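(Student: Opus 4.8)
The plan is not to prove this conjecture but to refute it, and Theorem~\ref{thm:algebraic-connectivity} already contains the essential obstruction. The decisive feature of the bound
\[
\frac{a(\mathsf{G})}{a(\mathsf{G}_D)} \ge 1 + \frac{(d-1)n}{N}\varphi_n^2
\]
is that its right-hand side does not involve the internal block $A_n$ at all; it depends only on the original graph, on $d$, and on the Fiedler component $\varphi_n$ at the disaggregated vertex. Consequently, no matter how large the internal edge weights are chosen, the algebraic connectivity $a(\mathsf{G}_D)$ stays below $a(\mathsf{G})$ by the fixed multiplicative factor $1 + \frac{(d-1)n}{N}\varphi_n^2$, which is strictly greater than $1$ whenever $\varphi_n \ne 0$. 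Since the conjecture predicts that enlarging the internal weights drives the disaggregated eigenvalues toward the original ones, exhibiting a single nontrivial eigenvalue --- here the second --- that fails to converge is enough to contradict it.

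To turn this into a sharp counterexample I would first pick a small graph whose Fiedler vector is explicitly known and has $\varphi_n \ne 0$ at the vertex to be split; a path or a star is convenient, and genericity makes this the typical situation rather than the exception. I would then analyze the regime in which all internal weights are scaled by a parameter $t \to \infty$. Writing $A_D = A_D^{(0)} + t\,L_{\mathrm{int}}$, where $L_{\mathrm{int}}$ is the Laplacian of the (connected) internal subgraph, the kernel of $L_{\mathrm{int}}$ is precisely the range of $P$. A standard penalty/perturbation argument then shows that the finite part of the spectrum of $A_D$ converges, as $t \to \infty$, to the spectrum of the generalized eigenvalue problem $A\bm{u} = \mu\,P^T P\,\bm{u}$ on the range of $P$, with $P^T P = \mathrm{diag}(I_{n_0}, d)$. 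Testing this problem against the Fiedler vector gives a limiting second eigenvalue bounded by $a(\mathsf{G})/\bigl(1 + (d-1)\varphi_n^2\bigr) < a(\mathsf{G})$, so $a(\mathsf{G}_D)$ converges to a value strictly below $a(\mathsf{G})$, in direct contradiction to the conjecture.

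Two points need care. First is the degenerate case $\varphi_n = 0$, in which Theorem~\ref{thm:algebraic-connectivity} collapses to the trivial inequality; here one cannot refute the conjecture through the algebraic connectivity and must instead prolongate a higher eigenvector via the construction \eqref{def:hat-varphi}. Since $\bm{1}_n$ is the only eigenvector of $A$ that is constant across all coordinates, at least one nontrivial eigenvector $\bm{\varphi}^{(k)}$ has nonzero last entry, and the same Rayleigh-quotient computation as in Lemma~\ref{lem:rq-relation} produces a fixed gap for the corresponding eigenvalue; thus no choice of split vertex can make every nontrivial eigenvalue converge. Second, and the main obstacle, is making the vague phrases ``under certain conditions'' and ``approximate'' precise enough to pin down what is being asserted, and then rigorously justifying the $t \to \infty$ limit of the perturbed eigenvalues --- I expect the cleanest route to be a Schur-complement reduction of $A_D$ onto the original vertex set, whose limit isolates exactly the stiffened mass matrix $P^T P$ responsible for the persistent gap.
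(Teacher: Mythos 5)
Your first paragraph is precisely the paper's own argument: the bound of Theorem~\ref{thm:algebraic-connectivity} is independent of the internal block $A_n$, so whenever the Fiedler component $\varphi_n \neq 0$ at the split vertex (which, as the paper notes, holds with probability one for random power law graphs) the algebraic connectivity of $\mathsf{G}_D$ remains bounded away from $a(\mathsf{G})$ by the fixed factor $1 + \tfrac{(d-1)n}{N}\varphi_n^2$ no matter how large the internal weights are, which refutes the conjecture. The extra machinery you sketch --- the $t\to\infty$ penalty limit, the Schur-complement reduction, and the degenerate case $\varphi_n = 0$ --- goes beyond what the paper does (it stops at the generic case) and is not needed for the refutation, though be aware that your degenerate-case claim that a single prolonged higher eigenvector yields ``a fixed gap for the corresponding eigenvalue'' would require a Courant--Fischer subspace argument, not just one Rayleigh quotient.
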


Theorem \ref{thm:algebraic-connectivity} directly implies that Conjecture \ref{wrongconject} is false when the characteristic value of the disaggregated vertex is non-zero, which, for a random power law graph, occurs with probability one.

We also have the following result, providing an estimate of how close the 
approximation $\widetilde{\bm{\varphi}}$ is 
to the invariant subspace with respect to $A_D$.
\begin{lemma}
Let $A$ and  $A_D$ be the original graph and disaggregated graph, $(\lambda, \bm{\varphi})$ be an eigenpair of the graph Laplacian $A$ associated with a simply connected graph $\mathsf{G}$, and $\widetilde{\bm{\varphi}}$ be defined by \eqref{def:hat-varphi}. We have
$$
\| A_D \widetilde{\bm{\varphi}} -  \mathrm{RQ}(\widetilde{\bm{\varphi}} ) \widetilde{\bm{\varphi}} \| \le  \left( \|A_{0n}^T (\bm{1}_{n_0} - \bm{\varphi}_0 / \varphi_n ) \| + \frac{\sqrt{dn (d+n)}}{N} \lambda + \frac{d n}{N} \lambda | \varphi_n | \right) | \varphi_n |.$$
\end{lemma}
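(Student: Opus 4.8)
The plan is to reduce $A_D\widetilde{\bm{\varphi}}$ to an explicit block vector, peel off the Rayleigh-quotient scaling, and then bound the resulting residual by the triangle inequality after a careful grouping of terms. First I would use $A_D\bm{1}_N = \bm{0}$ (as $A_D$ is a Laplacian) to write $A_D\widetilde{\bm{\varphi}} = A_D P\bm{\varphi}$. Carrying out the block multiplication
\[
A_D P\bm{\varphi} = \begin{pmatrix} A_0\bm{\varphi}_0 - \varphi_n A_{0n}\bm{1}_d \\ -A_{0n}^T\bm{\varphi}_0 + \varphi_n A_n\bm{1}_d \end{pmatrix}
\]
and substituting the identities $A_{0n}\bm{1}_d = \bm{a}_n$ and $A_n\bm{1}_d = A_{0n}^T\bm{1}_{n_0}$, the top block collapses---via the eigenvalue relation $A_0\bm{\varphi}_0 - \varphi_n\bm{a}_n = \lambda\bm{\varphi}_0$---to exactly $\lambda\bm{\varphi}_0$, while the bottom block becomes $A_{0n}^T(\varphi_n\bm{1}_{n_0} - \bm{\varphi}_0) = \varphi_n A_{0n}^T(\bm{1}_{n_0} - \bm{\varphi}_0/\varphi_n)$.

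Writing $\mu := \mathrm{RQ}(\widetilde{\bm{\varphi}})$ and subtracting $\mu\widetilde{\bm{\varphi}} = \mu(\bm{\varphi}_0 - s\bm{1}_{n_0},\, (\varphi_n - s)\bm{1}_d)^T$, I would then split the residual into three vectors and apply the triangle inequality: the vector $v_1 = (\bm{0},\, \varphi_n A_{0n}^T(\bm{1}_{n_0} - \bm{\varphi}_0/\varphi_n))^T$, whose norm directly yields the first summand; the vector $v_2 = ((\lambda - \mu)\bm{\varphi}_0,\, \bm{0})^T$, bounded using $\|\bm{\varphi}_0\| \le 1$ together with $\lambda - \mu = \lambda(c-1)/c \le \lambda(c-1) = \lambda\tfrac{(d-1)n}{N}\varphi_n^2 \le \lambda\tfrac{dn}{N}\varphi_n^2$, where $c = 1 + \tfrac{(d-1)n}{N}\varphi_n^2 \ge 1$, giving the third summand; and the vector $v_3 = (\mu s\bm{1}_{n_0},\, -\mu(\varphi_n - s)\bm{1}_d)^T$, which collects the two Rayleigh-shift contributions.

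The delicate step is bounding $\|v_3\|$, and this is where I expect the main obstacle to lie. One must \emph{not} bound the two blocks separately, since the naive triangle inequality produces $\sqrt{d} + \sqrt{n}$ in place of the sharper $\sqrt{d+n}$ appearing in the claim. Instead, exploiting block orthogonality together with the simplifications $s = \tfrac{d-1}{N}\varphi_n$ and $\varphi_n - s = \tfrac{n}{N}\varphi_n$, I would compute $\|v_3\|^2 = \mu^2\tfrac{\varphi_n^2}{N^2}\bigl[(d-1)^2(n-1) + n^2 d\bigr]$ and, using $\mu \le \lambda$, reduce the target estimate to the purely algebraic inequality $(d-1)^2(n-1) + n^2 d \le dn(d+n)$. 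This follows from $dn(d+n) - (d-1)^2(n-1) - n^2 d = (d-1)^2 + n(2d-1) > 0$ for $d > 1$. Assembling the bounds on $\|v_1\|$, $\|v_2\|$, and $\|v_3\|$ and factoring out $|\varphi_n|$ then completes the proof.
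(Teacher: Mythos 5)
Your proposal is correct and follows essentially the same route as the paper's proof: both reduce $A_D\widetilde{\bm{\varphi}}$ to $A_D P\bm{\varphi}$, use the block identities $A_{0n}\bm{1}_d = \bm{a}_n$, $A_n\bm{1}_d = A_{0n}^T\bm{1}_{n_0}$ and the eigenvalue relations to isolate the $A_{0n}^T(\varphi_n\bm{1}_{n_0}-\bm{\varphi}_0)$ term, invoke Lemma~\ref{lem:rq-relation} for the Rayleigh-quotient shift, and close with the same algebraic inequality $(d-1)^2 n_0 + dn^2 \le dn(d+n)$. Your three-way split $v_1+v_2+v_3$ is just a regrouping of the paper's decomposition $\bigl(A_D\widetilde{\bm{\varphi}}-\lambda\widetilde{\bm{\varphi}}\bigr) + (\lambda-\mathrm{RQ}(\widetilde{\bm{\varphi}}))\widetilde{\bm{\varphi}}$ (indeed $v_2+v_3$ equals the paper's last two terms combined), and both groupings yield the identical final bound.
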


\begin{proof}
We recall that 
$$ \| \widetilde{\bm{\varphi}} \| = \left( 1 + \frac{(d -1)n}{N} \varphi_n^2 \right)^{1/2}$$
and
$$ \lvert \lambda - \mathrm{RQ}(\widetilde{\bm{\varphi}}) \rvert  = \frac{ \frac{(d -1)n}{N} \varphi_n^2  }{ 1 + \frac{(d -1)n}{N} \varphi_n^2} \lambda.$$
We also have
\begin{align*}
A_D \widetilde{\bm{\varphi}}&=  A_D \left(P \bm{\varphi} - s\bm{1}_N \right) = A_D P \bm{\varphi} =  \begin{pmatrix} A_0 \bm{\varphi}_0 - \varphi_n A_{0n} \bm{1}_d  \\ \varphi_n A_n \bm{1}_d - A_{0n}^T \bm{\varphi}_0  \end{pmatrix} =  \begin{pmatrix} A_0 \bm{\varphi}_0 - \varphi_n \bm{a}_n \\ A_{0n}^T ( \varphi_n \bm{1}_{n_0} - \bm{\varphi}_0 )  \end{pmatrix} \\
&= \lambda P \bm{\varphi}  + \begin{pmatrix} \bm{0} \\ A_{0n}^T ( \varphi_n \bm{1}_{n_0} - \bm{\varphi}_0 ) - \lambda  \varphi_n \bm{1}_d \end{pmatrix} = \lambda \widetilde{\bm{\varphi}} + s \lambda \bm{1}_N + \begin{pmatrix} \bm{0} \\ A_{0n}^T ( \varphi_n \bm{1}_{n_0} - \bm{\varphi}_0 ) - \lambda  \varphi_n \bm{1}_d \end{pmatrix}\\
& = \lambda \widetilde{\bm{\varphi}} + \frac{\lambda \varphi_n}{N} \left[  (d-1) 
\begin{pmatrix}
\bm{1}_0 \\ \bm{0}
\end{pmatrix}
- 
n
 \begin{pmatrix}
\bm{0} \\ \bm{1}_d
\end{pmatrix}
   \right]
   +  \begin{pmatrix} \bm{0} \\ A_{0n}^T ( \varphi_n \bm{1}_{n_0} - \bm{\varphi}_0 ) \end{pmatrix},
\end{align*}
giving
\begin{align*}
\| A_D \widetilde{\bm{\varphi}} -  \mathrm{RQ}(\widetilde{\bm{\varphi}} ) \widetilde{\bm{\varphi}} \|  &\le \| A_D \widetilde{\bm{\varphi}} - \lambda \widetilde{\bm{\varphi}} \| +  \vert \lambda -   \mathrm{RQ}(\widetilde{\bm{\varphi}})  \rvert \| \widetilde{\bm{\varphi}} \|   \\
&\le  \|  A_{0n}^T (\varphi_n \bm{1}_{n_0} - \bm{\varphi}_0)  \| +\frac{ \sqrt{(d-1)^2 n_0 + d n^2}}{N}  | \varphi_n | \lambda + \frac{ \frac{(d -1)n}{N}  }{ \sqrt{1 + \frac{(d-1)n}{N} \varphi_n^2}} \varphi^2_n \lambda \\
&\le \left( \|A_{0n}^T (\bm{1}_{n_0} - \bm{\varphi}_0 / \varphi_n ) \| + \frac{\sqrt{dn(d+n)}}{N} \lambda + \frac{d n}{N} \lambda | \varphi_n | \right) | \varphi_n |.
\end{align*}
\end{proof}

In many applications, we are only concerned with minimal Laplacian eigenpairs. For minimal eigenvalues of scale-free graphs, we have $\lambda = O(1)$ and $\varphi_n = O (N^{-1/2})$. In this way, often the largest source of error comes from the term $\|A_{0n}^T (\varphi_n \bm{1}_{n_0} - \bm{\varphi}_0)\| $. Heuristically, the error of this term is typically best controlled when $d$ is relatively small and each new disaggregate is connected to roughly the same number of exterior vertices.

Next, we consider the eigenvalues of the normalized Laplacian. This gives us insight into how the Cheeger constant changes after disaggregating a vertex.  Again, suppose we have an eigenpair $(\nu, \bm{\phi})$ of the normalized graph Laplacian $D^{-1}A$, where $D$ is the degree matrix of $\mathsf{G}$, namely $D = \text{diag}(a_{11}, a_{22}, \cdots, a_{nn})$.
Again, we prolongate the eigenvector $\bm{\phi}$ and obtain an approximate eigenvector of the disaggregated normalized graph Laplacian $D_D^{-1}A_D$, where $D_D$ is the degree matrix of $\mathsf{G}_D$, in a similar fashion
\begin{equation}\label{def:phi}
\widetilde{\bm{\phi}} = P \bm{\phi} - s \bm{1}_N,  \quad \text{where} \ s = \frac{\langle P\bm{\phi}, \bm{1}_N \rangle_{D_D}}{\langle \bm{1}_N, \bm{1}_N \rangle_{D_D}}.
\end{equation}
We may write $D_D$ in the following way
\begin{align*}
D_D &= \text{diag}(a^D_{1,1}, \cdots, a^D_{n_0,n_0}, a^D_{n,n}, a^D_{n+1,n+1}, \cdots, a^D_{N,N})  \\
& = \text{diag}(a^D_{1,1}, \cdots, a^D_{n_0,n_0}, \omega_{n}, \omega_{n+1}, \cdots, \omega_N) + \text{diag}(0, \cdots, 0, d^{ex}_n, d^{ex}_{n+1}, \cdots, d^{ex}_{N}) \\
& =: D_D^1 + D_D^{ex},
\end{align*}
where $\omega_n, \omega_{n+1}, \cdots \omega_N$ are the weights of the edges incident with vertex $\mathsf{v}_n$ on the original graph (note $\sum_{i=n}^N \omega_i = a_{n,n}$) and $d^{ex}_i = a^D_{i,i} - \omega_i$, $i = n, n+1, \cdots, N$. We may also rewrite the shift $s$ as
$$s = \frac{\langle P\bm{\phi}, \bm{1}_N \rangle_{D_D}}{\langle \bm{1}_N, \bm{1}_N \rangle_{D_D}} = \frac{\sum_{i = n}^N  d^{ex}_i  \phi_n}{\sum_{i=1}^N a^D_{i,i}}.$$

Let $\omega_{\text{total}}(\mathsf{H})$ denote the total weights of a graph $\mathsf{H}$, and let $\mathsf{G}_a$ be the disaggregated local subgraph.  Similarly,  we consider $\widetilde{\bm{\phi}}$ as an approximation of the eigenvectors of the disaggregated normalized graph Laplacian. We have the following lemma.  
\begin{theorem} \label{thm:eigenvalue_normalized}
Let $(\nu, \bm{\phi})$ be an eigenpair of the normalized graph Laplacian associated with a simply connected graph $\mathsf{G}$ and $\widetilde{\bm{\phi}}$ be defined by \eqref{def:phi}. We have
\begin{equation}\label{eqn:RQ_normalized}
\frac{\langle A_D \widetilde{\bm{\phi}}, \widetilde{\bm{\phi}} \rangle}{\langle D_D \widetilde{\bm{\phi}}, \widetilde{\bm{\phi}}  \rangle } = \frac{\nu}{1 + \frac{2 \omega_{\text{total}}(\mathsf{G}) \, \omega_{\text{total}}(\mathsf{G}_a)}{\omega_{\text{total}}(\mathsf{G}_D)} \phi_n^2 },
\end{equation}
and
\begin{equation}\label{ine:nu_2}
\nu^D_2  = \alpha \nu_2, \quad \alpha = \left(    1 + \frac{2 \omega_{\text{total}}(\mathsf{G}) \, \omega_{\text{total}}(\mathsf{G}_a)}{\omega_{\text{total}}(\mathsf{G}_D)} \phi_n^2  \right)^{-1}  \leq 1.
\end{equation}
\end{theorem}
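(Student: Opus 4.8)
The plan is to reduce both assertions to the single exact identity \eqref{eqn:RQ_normalized}; once this Rayleigh-quotient identity is established, \eqref{ine:nu_2} follows by specializing the eigenpair to the Fiedler pair and invoking the Courant--Fischer characterization of $\nu_2^{D}$. I normalize so that $\langle D\bm{\phi},\bm{\phi}\rangle=1$, the natural scaling for $A\bm{\phi}=\nu D\bm{\phi}$ under which the coordinate $\phi_n$ enters \eqref{eqn:RQ_normalized} as written. The numerator is immediate: since $A_D\bm{1}_N=\bm{0}$ and $\widetilde{\bm{\phi}}=P\bm{\phi}-s\bm{1}_N$, the shift drops out, so $\langle A_D\widetilde{\bm{\phi}},\widetilde{\bm{\phi}}\rangle=\langle A_DP\bm{\phi},P\bm{\phi}\rangle=\langle P^{T}A_DP\bm{\phi},\bm{\phi}\rangle=\langle A\bm{\phi},\bm{\phi}\rangle=\nu\langle D\bm{\phi},\bm{\phi}\rangle=\nu$, using Lemma \ref{immediate}; this is the $D$-weighted analogue of the telescoping in Lemma \ref{lem:rq-relation}.

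The substance lies in the denominator. Exploiting the $D_D$-orthogonality $\langle\widetilde{\bm{\phi}},\bm{1}_N\rangle_{D_D}=0$ built into the definition of $s$ in \eqref{def:phi}, I would write $\langle D_D\widetilde{\bm{\phi}},\widetilde{\bm{\phi}}\rangle=\langle D_DP\bm{\phi},P\bm{\phi}\rangle-s^{2}\langle D_D\bm{1}_N,\bm{1}_N\rangle$ and evaluate each piece coordinatewise. Disaggregation leaves the external degrees untouched, $a^D_{ii}=a_{ii}$ for $i\le n_0$, while the disaggregated block sums to $\sum_{i=n}^{N}a^D_{ii}=a_{nn}+2\,\omega_{\text{total}}(\mathsf{G}_a)$, since every internal edge of $\mathsf{G}_a$ is counted in two degrees; hence $\langle D_DP\bm{\phi},P\bm{\phi}\rangle=\langle D\bm{\phi},\bm{\phi}\rangle+2\,\omega_{\text{total}}(\mathsf{G}_a)\phi_n^{2}$. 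For the shift I would invoke that a nontrivial eigenvector is $D$-orthogonal to the null vector, $\langle D\bm{\phi},\bm{1}_n\rangle=0$; the same degree bookkeeping then collapses the numerator of $s$ to $\langle P\bm{\phi},\bm{1}_N\rangle_{D_D}=2\,\omega_{\text{total}}(\mathsf{G}_a)\phi_n$, while $\langle\bm{1}_N,\bm{1}_N\rangle_{D_D}=\sum_i a^D_{ii}=2\,\omega_{\text{total}}(\mathsf{G}_D)$, giving $s^{2}\langle D_D\bm{1}_N,\bm{1}_N\rangle=\tfrac{2\,\omega_{\text{total}}(\mathsf{G}_a)^{2}}{\omega_{\text{total}}(\mathsf{G}_D)}\phi_n^{2}$. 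The decisive cancellation is $2\,\omega_{\text{total}}(\mathsf{G}_a)-\tfrac{2\,\omega_{\text{total}}(\mathsf{G}_a)^{2}}{\omega_{\text{total}}(\mathsf{G}_D)}=\tfrac{2\,\omega_{\text{total}}(\mathsf{G}_a)\,[\omega_{\text{total}}(\mathsf{G}_D)-\omega_{\text{total}}(\mathsf{G}_a)]}{\omega_{\text{total}}(\mathsf{G}_D)}=\tfrac{2\,\omega_{\text{total}}(\mathsf{G})\,\omega_{\text{total}}(\mathsf{G}_a)}{\omega_{\text{total}}(\mathsf{G}_D)}$, where I use $\omega_{\text{total}}(\mathsf{G}_D)=\omega_{\text{total}}(\mathsf{G})+\omega_{\text{total}}(\mathsf{G}_a)$ (the disaggregated graph retains all original weight and adds precisely the internal weight of $\mathsf{G}_a$). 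This produces $\langle D_D\widetilde{\bm{\phi}},\widetilde{\bm{\phi}}\rangle=1+\tfrac{2\,\omega_{\text{total}}(\mathsf{G})\,\omega_{\text{total}}(\mathsf{G}_a)}{\omega_{\text{total}}(\mathsf{G}_D)}\phi_n^{2}$ and hence \eqref{eqn:RQ_normalized}.

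For \eqref{ine:nu_2} I would specialize to the Fiedler pair $(\nu_2,\bm{\phi})$. Because $\widetilde{\bm{\phi}}$ is by construction $D_D$-orthogonal to the null vector $\bm{1}_N$ of $A_D$, it is an admissible competitor in the Courant--Fischer characterization $\nu_2^{D}=\min\{\langle A_D\bm{\psi},\bm{\psi}\rangle/\langle D_D\bm{\psi},\bm{\psi}\rangle:\bm{\psi}\neq\bm{0},\ \langle\bm{\psi},\bm{1}_N\rangle_{D_D}=0\}$, and \eqref{eqn:RQ_normalized} evaluates its Rayleigh quotient to exactly $\alpha\nu_2$. Thus \eqref{eqn:RQ_normalized} pins down the precise factor $\alpha=(1+\tfrac{2\,\omega_{\text{total}}(\mathsf{G})\,\omega_{\text{total}}(\mathsf{G}_a)}{\omega_{\text{total}}(\mathsf{G}_D)}\phi_n^{2})^{-1}\le 1$ attached to this canonical trial vector and yields the relation \eqref{ine:nu_2} through the Courant--Fischer ordering $\nu_2^{D}\le\alpha\nu_2$, exactly mirroring the unnormalized companion Theorem \ref{thm:algebraic-connectivity}; equality here is realized precisely when $\widetilde{\bm{\phi}}$ lies in the $\nu_2^{D}$-eigenspace of $D_D^{-1}A_D$.

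The main obstacle is not the telescoping but the denominator bookkeeping. One must resist treating $P^{T}D_DP$ as $D$: it is not, because the internal edges inflate the disaggregated degrees by $2\,\omega_{\text{total}}(\mathsf{G}_a)$, and this inflation is exactly what the shift term $s^{2}\langle D_D\bm{1}_N,\bm{1}_N\rangle$ must partially absorb. Carrying that term correctly is what produces the cross term $\omega_{\text{total}}(\mathsf{G})\,\omega_{\text{total}}(\mathsf{G}_a)$ rather than the naive $\omega_{\text{total}}(\mathsf{G}_a)^{2}$; the two facts $\omega_{\text{total}}(\mathsf{G}_D)=\omega_{\text{total}}(\mathsf{G})+\omega_{\text{total}}(\mathsf{G}_a)$ and $\langle D\bm{\phi},\bm{1}_n\rangle=0$ are precisely what drive the cancellation.
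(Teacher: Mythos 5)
Your proposal is correct and follows essentially the same route as the paper's proof: the numerator collapses to $\nu$ via $A_D \bm{1}_N = \bm{0}$ and $A = P^T A_D P$, and your denominator computation $\langle D_D P\bm{\phi}, P\bm{\phi}\rangle - s^2 \langle D_D \bm{1}_N, \bm{1}_N\rangle$ is algebraically identical to the paper's $\langle D_D P\bm{\phi}, P\bm{\phi}\rangle - s\langle D_D \bm{1}_N, P\bm{\phi}\rangle$, with your explicit appeals to $\langle D\bm{\phi},\bm{\phi}\rangle = 1$, $\langle D\bm{\phi},\bm{1}_n\rangle = 0$, and $\omega_{\text{total}}(\mathsf{G}_D) = \omega_{\text{total}}(\mathsf{G}) + \omega_{\text{total}}(\mathsf{G}_a)$ merely making precise the degree bookkeeping the paper leaves implicit in its splitting $D_D = D_D^1 + D_D^{ex}$. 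One small point in your favor: the test-vector argument (yours and the paper's alike) only delivers the inequality $\nu_2^D \le \alpha\,\nu_2$ via the min characterization, so your careful formulation --- inequality in general, equality exactly when $\widetilde{\bm{\phi}}$ lies in the $\nu_2^D$-eigenspace --- is more accurate than the equality $\nu_2^D = \alpha\,\nu_2$ asserted in \eqref{ine:nu_2}, and the inequality is all that the downstream Cheeger-constant estimates actually require.
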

\begin{proof}
We have
\begin{align*}
\langle A_D \widetilde{\bm{\phi}}, \widetilde{\bm{\phi}} \rangle = \langle  A_D(P \bm{\phi} - s \bm{1}_N ), P \bm{\phi} - s \bm{1}_N  \rangle = \langle  A_D P \bm{\phi}, \bm{\phi} \rangle = \langle A \bm{\phi}, \bm{\phi}  \rangle = \nu
\end{align*}
and
\begin{align*}
\langle D_D \bm{\phi}, \bm{\phi}  \rangle & = \langle D_D P \bm{\phi}, P\bm{\phi} \rangle - s \langle  D_D \bm{1}_N, P \bm{\phi} \rangle \\
& = \langle D_D^1 P \bm{\phi}, P\bm{\phi} \rangle + \langle D_D^{ex} P \bm{\phi}, P\bm{\phi} \rangle - s \langle  D_D \bm{1}_N, P \bm{\phi} \rangle \\
& = \langle D \bm{\phi}, \bm{\phi} \rangle + \sum_{i=n}^{N} d_i^{ex} \phi_n^2 - \frac{\left(  \sum_{i=n}^N d_i^{ex} \phi_n \right)^2}{\sum_i^{N} a^D_{i,i}} \\
& = 1 + \frac{ \left( \sum_i^N a^D_{i,i} - \sum_{i=n}^N d_i^{ex}  \right)\sum_{i=n}^N d_i^{ex}  }{\sum_i^N a^D_{i,i}} \phi_n^2 \\
& = 1 + \frac{ \left( \sum_{i=1}^n a_{i,i} \right)  \left( \sum_{i=n}^N d_i^{ex} \right) }{\sum_{i=1}^N a^D_{i,i}} \phi_n^2.
\end{align*}
Noting that $\sum_{i=1}^N a_{i,i} = 2 \omega_{\text{total}}(\sf{G})$, $\sum_{i=1}^N a^D_{i,i} = 2 \omega_{\text{total}}(\mathsf{G}_D)$, and $\sum_{i=n}^N d_i^{ex} = 2\omega_{\text{total}}(\mathsf{G}_{a})$, we obtain \eqref{eqn:RQ_normalized}.  Moreover, \eqref{ine:nu_2} follows directly from \eqref{eqn:RQ_normalized}.
\end{proof}

The Cheeger constant of a weighted graph is defined as follows \cite{Friedland.S;Nabben.R2002a,Chung.F1997a}
\begin{equation*} 
h(\mathsf{G}) = \min_{\emptyset \neq U \subset V} \frac{|  E(U, \bar{U}) |}{\min( \text{vol}(U), \text{vol}(\bar{U}) )}, 
\end{equation*}
where
\begin{equation*}
\bar{U} = V \backslash U, \quad \text{vol}(U) = \sum_{i \in U} \delta_i, \quad \delta_i = \sum_{e=(i,j) \in E} \omega_e, \quad | E(U, \bar{U}) | = \sum_{\substack{e=(i,j)\in E \\ i \in U, j \in \bar{U}}} \omega_e.
\end{equation*}
Due to the Cheeger inequality \cite{Friedland.S;Nabben.R2002a,Chung.F1997a}, the Cheeger constant $h(\mathsf{G})$ and $\nu_2$ are related as follows
\begin{equation}\label{ine:cheeger}
1 - \sqrt{1 - h(\mathsf{G})^2} \leq \nu_2 \leq 2 h(\mathsf{G}).
\end{equation}

\begin{theorem}\label{thm:Cheeger constant}
For the Cheeger constant of the original graph $\mathsf{G}$ and the disaggregated and simply connected graph $\mathsf{G}_D$, we have
\begin{equation*}
h(\mathsf{G}_D) \leq \sqrt{1 - (1 - 2  \alpha h(\mathsf{G}))^2 },
\end{equation*}
where $\alpha$ is defined by \eqref{ine:nu_2}. If $h(\mathsf{G}) \geq \frac{4 \alpha}{ 4 \alpha^2 + 1}$, then $h(\mathsf{G}_D) \leq h(\mathsf{G})$.
\end{theorem}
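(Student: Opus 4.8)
The plan is to chain together the two Cheeger inequalities \eqref{ine:cheeger} for $\mathsf{G}$ and $\mathsf{G}_D$ with the spectral comparison $\nu^D_2 \le \alpha\,\nu_2$ supplied by Theorem~\ref{thm:eigenvalue_normalized}. First I would apply the lower Cheeger bound to the disaggregated graph, giving $1 - \sqrt{1 - h(\mathsf{G}_D)^2} \le \nu^D_2$, and the upper Cheeger bound to the original graph, giving $\nu_2 \le 2h(\mathsf{G})$. Since $\alpha \ge 0$, feeding these through $\nu^D_2 \le \alpha\,\nu_2$ yields
\[
1 - \sqrt{1 - h(\mathsf{G}_D)^2} \le 2\alpha\, h(\mathsf{G}).
\]
Rearranging gives $\sqrt{1 - h(\mathsf{G}_D)^2} \ge 1 - 2\alpha\, h(\mathsf{G})$, and squaring both sides produces $h(\mathsf{G}_D)^2 \le 1 - (1 - 2\alpha\, h(\mathsf{G}))^2$, from which the first inequality follows by taking square roots.

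For the second assertion I would argue purely algebraically that the bound $\sqrt{1 - (1 - 2\alpha\, h(\mathsf{G}))^2} \le h(\mathsf{G})$ is equivalent to the stated hypothesis, so that it combines with the first part to give $h(\mathsf{G}_D) \le h(\mathsf{G})$. Squaring and expanding $(1 - 2\alpha\, h(\mathsf{G}))^2 = 1 - 4\alpha\, h(\mathsf{G}) + 4\alpha^2 h(\mathsf{G})^2$ reduces the desired inequality to $4\alpha\, h(\mathsf{G}) - 4\alpha^2 h(\mathsf{G})^2 \le h(\mathsf{G})^2$; dividing by $h(\mathsf{G}) > 0$ and collecting terms gives exactly $h(\mathsf{G})\,(4\alpha^2 + 1) \ge 4\alpha$, i.e. $h(\mathsf{G}) \ge \tfrac{4\alpha}{4\alpha^2 + 1}$.

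The point requiring the most care is the squaring step in the first part, which only preserves the inequality in the direction I need when $1 - 2\alpha\, h(\mathsf{G}) \ge 0$, and which also requires the radicand $1 - (1 - 2\alpha\, h(\mathsf{G}))^2$ to be nonnegative. To control this I would first observe that $|E(U,\bar U)| \le \min(\text{vol}(U), \text{vol}(\bar U))$ for every $U$, which forces $h(\mathsf{G}) \le 1$; combined with $\alpha \le 1$ this keeps $\alpha\, h(\mathsf{G}) \le 1$, so $|1 - 2\alpha\, h(\mathsf{G})| \le 1$ and the radicand stays in $[0,1]$. The genuinely delicate regime is $2\alpha\, h(\mathsf{G}) > 1$, where the chained inequality $\sqrt{1 - h(\mathsf{G}_D)^2} \ge 1 - 2\alpha\, h(\mathsf{G})$ becomes vacuous and must be treated separately (or tacitly excluded). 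I would also retain $\nu^D_2 \le \alpha\, \nu_2$ as an \emph{upper} bound rather than the equality written in \eqref{ine:nu_2}, since it is the upper bound on $\nu^D_2$ — not an equality — that the Cheeger chain actually consumes.
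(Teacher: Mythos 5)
Your proposal is correct and takes essentially the same route as the paper: the paper's proof is exactly the chain $h(\mathsf{G}_D) \leq \sqrt{1-(1-\nu^D_2)^2} = \sqrt{1-(1-\alpha\nu_2)^2} \leq \sqrt{1-(1-2\alpha h(\mathsf{G}))^2}$ followed by the same ``basic algebra'' reduction of the hypothesis $h(\mathsf{G}) \geq \tfrac{4\alpha}{4\alpha^2+1}$. In fact you are more careful than the paper, which silently performs the same squaring steps and thus also tacitly assumes $\nu^D_2 \leq 1$ and $2\alpha h(\mathsf{G}) \leq 1$ — the delicate regime you correctly flag.
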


\begin{proof}
Based on \eqref{ine:cheeger} and \eqref{ine:nu_2}, we have 
\begin{align*}
h(\mathsf{G}_D) \leq \sqrt{1 - (1 - \nu^D_2)^2} = \sqrt{1 - (1- \alpha \nu_2)^2} \leq \sqrt{1 - (1 - 2 \alpha h(\mathsf{G}))^2 }.
\end{align*}
Basic algebra shows that $h(\mathsf{G}_D) \leq h(\mathsf{G})$ if $h(\mathsf{G}) \geq \frac{4 \alpha}{ 4 \alpha^2 + 1}$.
\end{proof}

\section{Graph Disaggregation}
We now move on to the more general case of multiple disaggregated vertices. Without loss of generality, for a graph Laplacian $A \in \mathbb{R}^{n \times n}$, suppose we are disaggregating the first $m$ vertices. This gives us the disaggregated Laplacian $A_D \in \mathbb{R}^{N\times N}$. Here, $N$ is
the number of vertices in the disaggregated graph, given by
\[
N = n - m + \sum_{k=1}^m d_k = n-m+n_d, \quad 
n_d=\sum_{k=1}^m d_k.
\]
Note that we have $m$ groups of vertices associated with the
disaggregation, which we can also number consecutively
\begin{equation}\label{numb}
\{1,\ldots,N\} = \{\underbrace{1,\ldots, d_1}_{d_1},\ldots
\underbrace{d_1+1,\ldots, d_1+d_2}_{d_2},\ldots, n_d+1,\ldots,N\}.
\end{equation}

Similar to the case of a single disaggregated vertex, we can establish a relationship between $A$ and $A_D$ through a prolongation matrix $P: \mathbb{R}^n \rightarrow  \mathbb{R}^N$, given by
\begin{equation}
P = \begin{pmatrix} \label{def:Prolongation}
P_m & 0\\
0 & I_{n_0 \times n_0}
\end{pmatrix}, \quad\mbox{where}\quad n_0=(n-m).
\end{equation}
Here, $P_m\in \mathbb{R}^{n_d \times m}$, and
\[
P_m=\begin{pmatrix}
\bm{1}_{d_1} & 0 & \ldots & 0 \\
0 & \bm{1}_{d_2} & \ldots & 0 \\
\vdots & \vdots & \vdots & \vdots\\
0 & 0 & \ldots & \bm{1}_{d_m}
\end{pmatrix}.
\]
Note that $P_m^T P_m = \operatorname{diag}(d_1,\ldots d_m)$. We have the following lemma, which can be easily verified by simple algebraic calculation.

\begin{lemma} 
Let $A$ and $A_D$ be the graph Laplacian of the original graph $\mathsf{G}$ and the disaggregated and simply connected graph $\mathsf{G}_D$, respectively. If $P$ is defined as \eqref{def:Prolongation}, then we have
\begin{equation} \label{eqn:A-A_D}
A = P^T A_D P.
\end{equation}
\end{lemma}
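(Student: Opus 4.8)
The plan is to avoid multiplying out the three block matrices directly and instead verify the identity \eqref{eqn:A-A_D} at the level of quadratic forms. Two matrices in $\mathbb{R}^{n\times n}$ coincide precisely when their associated bilinear forms agree, so it suffices to show $\langle P^T A_D P\,\bm{u},\bm{v}\rangle=\langle A\bm{u},\bm{v}\rangle$ for all $\bm{u},\bm{v}\in\mathbb{R}^n$; by the definition of the transpose this is the same as $\langle A_D(P\bm{u}),P\bm{v}\rangle=\langle A\bm{u},\bm{v}\rangle$. The argument then reduces entirely to understanding how the prolongation $P$ acts on edge differences.

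The key observation is that $P$ is constant on each disaggregation block: following the numbering \eqref{numb}, the coordinates of $P\bm{u}$ attached to the $d_k$ copies of the $k$-th disaggregated vertex all equal the single value $u_k$, while the $n_0$ unsplit coordinates are left unchanged. Using the edge-sum representation of $A_D$ as a graph Laplacian,
\[
\langle A_D(P\bm{u}),P\bm{v}\rangle=\sum_{e'=(p,q)\in E_D}\omega_{e'}\big((P\bm{u})_p-(P\bm{u})_q\big)\big((P\bm{v})_p-(P\bm{v})_q\big),
\]
I would split the edge set $E_D$ of $\mathsf{G}_D$ into the internal edges (those joining two copies within a single disaggregation block) and the external edges (those inherited from the incident edges of $\mathsf{G}$).

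For an internal edge $e'=(p,q)$ both endpoints are copies of the same original vertex $k$, so $(P\bm{u})_p=(P\bm{u})_q=u_k$ and the corresponding difference vanishes; thus every internal edge contributes zero to the sum, regardless of the internal structure or edge weights chosen for the disaggregation. For an external edge, the construction of $\mathsf{G}_D$ reattaches each edge originally incident to a (possibly split) vertex to exactly one of its copies while keeping the same weight; this yields a weight-preserving bijection between the external edges of $\mathsf{G}_D$ and the edges of $\mathsf{G}$, under which $(P\bm{u})_p-(P\bm{u})_q=u_i-u_j$ for the corresponding original edge $(i,j)$. Summing the surviving external contributions then reproduces exactly $\sum_{e=(i,j)\in E}\omega_e(u_i-u_j)(v_i-v_j)=\langle A\bm{u},\bm{v}\rangle$, which establishes \eqref{eqn:A-A_D}.

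I expect no serious obstacle, since the statement is genuinely a routine verification as the paper remarks; the one point deserving care is the bookkeeping behind the external-edge bijection and weight preservation. This is precisely the per-block generalization of the structural relations recorded in the single-vertex setting (for instance $\bm{a}_n=A_{0n}\bm{1}_d$), and making that identification precise for all $m$ groups simultaneously is the only place where the definition of disaggregation is actually invoked.
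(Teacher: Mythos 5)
Your proof is correct, and it takes a different route from the paper, which offers no written argument at all: the lemma is stated as ``easily verified by simple algebraic calculation,'' the implied calculation being a direct block multiplication of $P^T A_D P$ using the structural relations between the blocks of $A$ and $A_D$ (the multi-vertex analogues of $\bm{a}_n = A_{0n}\bm{1}_d$, $A_{0n}^T\bm{1}_{n_0} = A_n\bm{1}_d$ recorded in the single-vertex section). Your edge-sum verification at the level of bilinear forms is arguably the cleaner packaging of the same bookkeeping: it makes transparent \emph{why} the identity holds independently of the internal structure and weights of each disaggregation block (every internal edge difference is killed by the block-constancy of $P\bm{u}$), and it handles in one stroke all three species of external edges --- unsplit-to-unsplit, unsplit-to-copy, and copy-to-copy across two different blocks, the latter being the edges the paper later calls $E_D^2$, which your phrase ``incident to a (possibly split) vertex'' correctly covers since both endpoints get reattached and $(P\bm{u})_p-(P\bm{u})_q = u_k - u_\ell$. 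One refinement worth noting: the structural relations the paper actually imposes only require that the weights of the disaggregated edges attached to the copies of a vertex \emph{sum} to the original weight, so in principle one original edge may be distributed among several copies rather than reattached to exactly one; your weight-preserving bijection is thus slightly more restrictive than necessary, but your argument survives the generalization verbatim, since all sub-edges of a split original edge $e=(i,j)$ carry the same difference $(u_i-u_j)(v_i-v_j)$ and their weights sum to $\omega_e$. With that observation folded in, your proof is complete and, if anything, more robust than the block computation the paper gestures at.
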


If we look at the disaggregated graph Laplacian $A_D$ directly, we can obtain a similar bound on the algebraic connectivity as shown in Theorem \ref{thm:algebraic-connectivity}.  Let $(\lambda, \bm{\varphi})$ be an eigenpair of $A$. We can define an approximated eigenvector of $A_D$ by prolongating $\bm{\varphi}$ as follows
\begin{equation} \label{def:hat-varphi-multi}
\widetilde{\bm{\varphi}} = P \bm{\varphi} - s \bm{1}_N, \quad \text{where} \ s = \frac{1}{N}\sum_{i=1}^m (d_i - 1) \varphi_i.
\end{equation}
It is easy to check that $\langle \widetilde{\bm{\varphi}}, \bm{1}_N \rangle = 0$.  Now we have the following lemma about the Rayleigh quotient of $\widetilde{\bm{\varphi}}$ with respect to $A_D$.

\begin{lemma} \label{lem:rq-relation-multi}
Let $(\lambda, \bm{\varphi})$ be an eigenpair of the graph Laplacian $A$ associated with a simply connected graph $\mathsf{G}$ and $\widetilde{\bm{\varphi}}$ be defined by \eqref{def:hat-varphi-multi}. We have
$$\mathrm{RQ}(\widetilde{\bm{\varphi}}):=\frac{ \langle  A_D \widetilde{\bm{\varphi}} , \widetilde{\bm{\varphi}} \rangle}{  \langle\widetilde{\bm{\varphi}} , \widetilde{\bm{\varphi}} \rangle } \leq \frac{ \lambda } { 1 + \frac{1}{N}\sum_{i=1}^m (d_i - 1) (n + n_d - md_i)   \varphi_i^2 }. 
$$
Moreover, if for $d_i^{\max} = \max_i d_i$, we have $n + n_d - m d_i^{\max} > 0$, then
$
RQ(\widetilde{\bm{\varphi}}) <  \lambda.
$
\end{lemma}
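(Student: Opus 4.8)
The plan is to mirror the computation in Lemma~\ref{lem:rq-relation}, evaluating the numerator and denominator of $\mathrm{RQ}(\widetilde{\bm{\varphi}})$ separately and then reducing the claimed bound to a Cauchy--Schwarz inequality. Since the Rayleigh quotient is invariant under rescaling $\widetilde{\bm{\varphi}}$, and $\widetilde{\bm{\varphi}}$ scales linearly with $\bm{\varphi}$, I would first normalize $\|\bm{\varphi}\|=1$. For the numerator I would use that $A_D$ is a Laplacian, so $A_D\bm{1}_N=\bm{0}$, together with the identity $A=P^TA_DP$ from \eqref{eqn:A-A_D}; this gives $\langle A_D\widetilde{\bm{\varphi}},\widetilde{\bm{\varphi}}\rangle = \langle A_D P\bm{\varphi},P\bm{\varphi}\rangle = \langle A\bm{\varphi},\bm{\varphi}\rangle = \lambda$, exactly as in the single-vertex case.

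For the denominator I would expand $\langle\widetilde{\bm{\varphi}},\widetilde{\bm{\varphi}}\rangle = \langle P\bm{\varphi},P\bm{\varphi}\rangle - 2s\langle P\bm{\varphi},\bm{1}_N\rangle + s^2N$. Using the block structure of $P$ in \eqref{def:Prolongation}, the $i$-th disaggregated value $\varphi_i$ is repeated $d_i$ times, so $\langle P\bm{\varphi},P\bm{\varphi}\rangle = \sum_{i=1}^m d_i\varphi_i^2 + \sum_{i=m+1}^n\varphi_i^2 = 1 + \sum_{i=1}^m(d_i-1)\varphi_i^2$. The key simplification is that the orthogonality $\langle\bm{\varphi},\bm{1}_n\rangle=0$ forces $\langle P\bm{\varphi},\bm{1}_N\rangle = \sum_{i=1}^m d_i\varphi_i + \sum_{i=m+1}^n\varphi_i = \sum_{i=1}^m(d_i-1)\varphi_i = Ns$, precisely the shift defining $s$ in \eqref{def:hat-varphi-multi}. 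Hence the cross term and the $s^2N$ term combine to $-Ns^2$, yielding
$$\langle\widetilde{\bm{\varphi}},\widetilde{\bm{\varphi}}\rangle = 1 + \sum_{i=1}^m(d_i-1)\varphi_i^2 - \frac{1}{N}\Big(\sum_{i=1}^m(d_i-1)\varphi_i\Big)^2.$$

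It then remains to show that this denominator is at least the claimed quantity $1+\frac{1}{N}\sum_{i=1}^m(d_i-1)(n+n_d-md_i)\varphi_i^2$, which (since $\lambda\ge 0$) yields the stated upper bound on $\mathrm{RQ}(\widetilde{\bm{\varphi}})$. I expect this to be the main obstacle, although it resolves to a clean algebraic identity: using $N=n+n_d-m$ one checks $N-(n+n_d-md_i)=m(d_i-1)$, so the difference of the two denominators equals
$$\frac{1}{N}\Big[\,m\sum_{i=1}^m(d_i-1)^2\varphi_i^2 - \Big(\sum_{i=1}^m(d_i-1)\varphi_i\Big)^2\,\Big],$$
which is nonnegative by the Cauchy--Schwarz inequality applied to the vectors $(1,\dots,1)$ and $\big((d_1-1)\varphi_1,\dots,(d_m-1)\varphi_m\big)$ in $\mathbb{R}^m$.

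Finally, for the strict inequality I would return to the displayed lower bound on the denominator. Under the hypothesis $n+n_d-md_i^{\max}>0$ we have $n+n_d-md_i>0$ for every $i$, so each summand $(d_i-1)(n+n_d-md_i)\varphi_i^2$ is nonnegative; thus the denominator is at least $1$, and it strictly exceeds $1$ — forcing $\mathrm{RQ}(\widetilde{\bm{\varphi}})<\lambda$ — as soon as $\bm{\varphi}$ has a nonzero entry at some genuinely disaggregated vertex ($d_i>1$). The one point requiring care is the degenerate case in which $\bm{\varphi}$ vanishes on all disaggregated coordinates, where $s=0$, $\widetilde{\bm{\varphi}}=P\bm{\varphi}$, and the quotient equals $\lambda$ exactly; this is the sole configuration excluded by the strict inequality.
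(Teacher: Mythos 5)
Your proof is correct and follows essentially the same route as the paper's: the numerator equals $\lambda$ via $A_D\bm{1}_N=\bm{0}$ and $A=P^TA_DP$, the denominator expands to $1+\sum_{i=1}^m(d_i-1)\varphi_i^2-\tfrac{1}{N}\bigl(\sum_{i=1}^m(d_i-1)\varphi_i\bigr)^2$, and the bound follows from the same Cauchy--Schwarz step $\bigl(\sum_{i=1}^m(d_i-1)\varphi_i\bigr)^2\le m\sum_{i=1}^m(d_i-1)^2\varphi_i^2$ together with $N-m(d_i-1)=n+n_d-md_i$. If anything, you are slightly more careful than the paper on the ``Moreover'' claim, correctly flagging that strictness fails exactly when $\bm{\varphi}$ vanishes on all disaggregated coordinates---a degenerate case the paper's proof passes over silently.
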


\begin{proof}
We note that
\begin{align*}
  \langle {A_D}  \widetilde{\bm{\varphi}} , \widetilde{\bm{\varphi}}\rangle&=\langle  A_D (P \bm{\varphi}  - s \bm{1}_N),  P \bm{\varphi}  - s \bm{1}_N \rangle = \langle A_D P \bm{\varphi} , P \bm{\varphi}\rangle - 2s\langle A_D  \bm{1}_N  , P\bm{\varphi}\rangle + s^2 \langle A_D \bm{1}_N, \bm{1}_N  \rangle  \\
  & = \langle P^TA_D P \bm{\varphi} , \bm{\varphi}\rangle  = \langle A \bm{\varphi}, \bm{\varphi} \rangle = \lambda.
 \end{align*}
 Denoting $\bm{\varphi}$ by $\bm{\varphi} = (\varphi_1, \varphi_2, \cdots, \varphi_m, \bm{\varphi}_0)^T$, we have
 \begin{align*}
 \langle  \widetilde{\bm{\varphi}} , \widetilde{\bm{\varphi}} \rangle &= \langle P \bm{\varphi}  - s \bm{1}_N,  P \bm{\varphi}  - s \bm{1}_N \rangle  = \langle P \bm{\varphi} ,  P \bm{\varphi} \rangle - 2s  \langle P \bm{\varphi} , \bm{1}_N \rangle + s^2 \langle \bm{1}_N, \bm{1}_N  \rangle \\
 & = \langle \bm{\varphi_0}, \bm{\varphi}_0 \rangle + \sum_{i=1}^m \varphi_i^2 \langle  \bm{1}_{d_i}, \bm{1}_{d_i} \rangle - 
 2s \left(  \langle \bm{\varphi}_0, \bm{1}_{n_0} \rangle  + \sum_{i=1}^m \varphi_i \langle \bm{1}_{d_i}, \bm{1}_{d_i}  \rangle \right) + s^2N   \\
 & =  \langle \bm{\varphi_0}, \bm{\varphi}_0 \rangle + \sum_{i=1}^m  \varphi_i^2 + \sum_{i=1}^m(d_i-1) \varphi_i^2  - 2s \sum_{i=1}^m (d_i-1) \varphi_i + s^2 N  \\
 & = 1 + \sum_{i=1}^m(d_i-1) \varphi_i^2 - \frac{1}{N} \left(  \sum_{i=1}^m (d_i -1) \varphi_i  \right)^2 \\
 & \geq 1+  \sum_{i=1}^m(d_i-1) \varphi_i^2 - \frac{m}{N} \sum_{i=1}^m (d_i -1)^2 \varphi_i^2 \\
 & = 1 + \sum_{i=1}^m (d_i - 1) \frac{N - m (d_i - 1)}{N} \varphi_i^2 \\
 & = 1 + \frac{1}{N}\sum_{i=1}^m (d_i - 1) (n + n_d - md_i) \varphi_i^2.
 \end{align*}
This completes the proof. 
\end{proof}

From the Rayleigh quotient and applying the above lemma to the Fielder vector, we have the following theorem concerning the algebraic connectivity. 

\begin{theorem} \label{thm:algebraic-connectivity-multi}
Let $\bm{\varphi}$ be the Fiedler vector of the graph Laplacian $A$ associated with a simply connected graph $\mathsf{G}$ and $A_D$ be the graph Laplacian corresponding to the disaggregated and simply connected graph $\mathsf{G}_D$. Suppose we have disaggregated $m$ vertices and each of those vertices are disaggregated into $d_i>1$ vertices, $i=1,2,\cdots,m$. We have
$$\frac{a(\mathsf{G})}{a(\mathsf{G}_D)} \ge   1 + \frac{1}{N}\sum_{i=1}^m (d_i - 1) (n + n_d - md_i)   \varphi_i^2 .$$
\end{theorem}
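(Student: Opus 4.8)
The plan is to follow the proof of Theorem~\ref{thm:algebraic-connectivity} essentially verbatim, substituting the multi-vertex Rayleigh quotient bound of Lemma~\ref{lem:rq-relation-multi} for its single-vertex predecessor. All of the analytic work has already been packed into that lemma, so the theorem reduces to a one-line variational argument followed by an algebraic rearrangement.

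First I would take $\bm{\varphi}$ to be the normalized Fiedler vector, so that its associated eigenvalue is $\lambda = \lambda_2(A) = a(\mathsf{G})$, and form the prolongated approximation $\widetilde{\bm{\varphi}}$ via \eqref{def:hat-varphi-multi}. As recorded immediately after that definition, $\langle \widetilde{\bm{\varphi}}, \bm{1}_N \rangle = 0$. Since $\mathsf{G}_D$ is simply connected, the kernel of $A_D$ is spanned by $\bm{1}_N = \bm{\varphi}^{(1)}(A_D)$, so $\widetilde{\bm{\varphi}}$ lies in the orthogonal complement of the lowest eigenvector of $A_D$; moreover the norm identity computed inside the proof of Lemma~\ref{lem:rq-relation-multi} gives $\langle \widetilde{\bm{\varphi}}, \widetilde{\bm{\varphi}} \rangle > 1 > 0$, so $\widetilde{\bm{\varphi}} \neq \bm{0}$ and its Rayleigh quotient is well defined.

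Next I would invoke the Courant–Fischer characterization of the second-smallest eigenvalue of the symmetric positive semidefinite matrix $A_D$: every nonzero vector orthogonal to $\bm{1}_N$ has Rayleigh quotient at least $\lambda_2(A_D) = a(\mathsf{G}_D)$. Applying this to $\widetilde{\bm{\varphi}}$ and chaining with Lemma~\ref{lem:rq-relation-multi} yields
\[
a(\mathsf{G}_D) \;\le\; \mathrm{RQ}(\widetilde{\bm{\varphi}}) \;\le\; \frac{a(\mathsf{G})}{\,1 + \frac{1}{N}\sum_{i=1}^m (d_i - 1)(n + n_d - m d_i)\varphi_i^2\,},
\]
and cross-multiplying produces the asserted lower bound on $a(\mathsf{G})/a(\mathsf{G}_D)$.

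There is essentially no obstacle remaining: the one genuine estimate, the Cauchy–Schwarz inequality $\bigl(\sum_i (d_i-1)\varphi_i\bigr)^2 \le m \sum_i (d_i-1)^2 \varphi_i^2$, is already carried out in Lemma~\ref{lem:rq-relation-multi}, where it converts the exact norm identity into the displayed lower bound. The only point worth a word of care is the sign of the individual factors $n + n_d - m d_i$, which need not be positive; this causes no trouble for the theorem, however, because one may argue directly through the clean chain $\frac{a(\mathsf{G})}{a(\mathsf{G}_D)} \ge \langle \widetilde{\bm{\varphi}}, \widetilde{\bm{\varphi}}\rangle \ge 1 + \frac{1}{N}\sum_{i=1}^m (d_i-1)(n+n_d-md_i)\varphi_i^2$, which uses only that $\langle \widetilde{\bm{\varphi}}, \widetilde{\bm{\varphi}}\rangle$ is positive and sidesteps any division by a possibly non-positive denominator.
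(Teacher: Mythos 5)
Your proposal is correct and follows essentially the same route as the paper: the paper's proof is exactly the variational bound $a(\mathsf{G}_D) \le \mathrm{RQ}(\widetilde{\bm{\varphi}})$ (valid since $\langle \widetilde{\bm{\varphi}}, \bm{1}_N \rangle = 0$) chained with Lemma~\ref{lem:rq-relation-multi}, mirroring the proof of Theorem~\ref{thm:algebraic-connectivity}. Your closing observation --- that arguing through $\frac{a(\mathsf{G})}{a(\mathsf{G}_D)} \ge \langle \widetilde{\bm{\varphi}}, \widetilde{\bm{\varphi}} \rangle \ge 1 + \frac{1}{N}\sum_{i=1}^m (d_i-1)(n+n_d-md_i)\varphi_i^2$ avoids dividing by a quantity whose sign is not guaranteed --- is a careful refinement the paper leaves implicit, but it does not alter the substance of the argument.
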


\begin{proof}
The proof is similar to the proof of Theorem \ref{thm:algebraic-connectivity} and uses Lemma \ref{lem:rq-relation-multi}.
\end{proof}

It is possible to perform a more careful estimate, using the fact that disaggregating $m$ vertices at once is equivalent to disaggregating $m$ vertices one by one.  Denote $n_0 = n$ and $n_i = n_{i-1} - 1 + d_i$, $i = 1,2,\cdots, m$.  Note that $n_i = n - i + \sum_{k=1}^i d_k$, $i = 1,2,\cdots, m$ and $N = n_m$.  Recursively applying Theorem \ref{thm:algebraic-connectivity}, we have the following result.

\begin{theorem} \label{thm:algebraic-connectivity-multi-1}
Let $\bm{\varphi}$ be the Fiedler vector of the graph Laplacian $A$ associated with a simply connected graph $\mathsf{G}$ and $A_D$ be the graph Laplacian corresponding to the disaggregated and simply connected graph $\mathsf{G}_D$, respectively. Suppose we disaggregated $m$ vertices and each of those are disaggregated into $d_i>1$ vertices, $i=1,2,\cdots,m$. We have
$$\frac{a(\mathsf{G})}{a(\mathsf{G}_D)} \ge   \prod \left( 1 +  \frac{ (d_i - 1) n_{i-1} }{n_i} \varphi_i^2 \right). $$
\end{theorem}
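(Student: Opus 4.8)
The plan is to exploit the observation, already flagged in the text, that disaggregating $m$ vertices simultaneously yields the same graph as disaggregating them one at a time, and then to apply Theorem~\ref{thm:algebraic-connectivity} once per step and telescope the resulting chain of ratios. Concretely, I would set $\mathsf{G}^{(0)} = \mathsf{G}$ and let $\mathsf{G}^{(i)}$ be obtained from $\mathsf{G}^{(i-1)}$ by disaggregating the $i$-th distinguished vertex into $d_i$ vertices, so that $\mathsf{G}^{(m)} = \mathsf{G}_D$. By construction $\mathsf{G}^{(i)}$ has $n_i = n_{i-1} - 1 + d_i$ vertices, matching the recursion in the statement, with $n_0 = n$ and $n_m = N$. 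The first routine check is that each intermediate graph is connected: replacing one vertex by a connected internal subgraph while re-attaching every incident external edge to a new internal vertex preserves connectivity, so every $\mathsf{G}^{(i)}$ is simply connected and $a(\mathsf{G}^{(i)}) > 0$ throughout, which is needed for the ratios to make sense.

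Next I would apply Theorem~\ref{thm:algebraic-connectivity} to the single-vertex transition $\mathsf{G}^{(i-1)} \to \mathsf{G}^{(i)}$, identifying the theorem's parameters as $n \leftarrow n_{i-1}$, $N \leftarrow n_i$, and $d \leftarrow d_i$. Letting $\varphi_i$ denote the coordinate, at the vertex disaggregated at step $i$, of a unit-norm Fiedler vector of $\mathsf{G}^{(i-1)}$, the theorem gives
$$\frac{a(\mathsf{G}^{(i-1)})}{a(\mathsf{G}^{(i)})} \ge 1 + \frac{(d_i - 1)\, n_{i-1}}{n_i}\, \varphi_i^2.$$
Multiplying these $m$ inequalities and telescoping the left-hand side,
$$\frac{a(\mathsf{G})}{a(\mathsf{G}_D)} = \prod_{i=1}^m \frac{a(\mathsf{G}^{(i-1)})}{a(\mathsf{G}^{(i)})} \ge \prod_{i=1}^m \left( 1 + \frac{(d_i - 1)\, n_{i-1}}{n_i}\, \varphi_i^2 \right),$$
which is exactly the claimed bound. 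Since Theorem~\ref{thm:algebraic-connectivity} is established by testing the Rayleigh quotient of the disaggregated Laplacian against the shifted prolongation of the Fiedler vector, its conclusion holds for \emph{any} eigenvector associated with $a(\mathsf{G}^{(i-1)})$; hence no uniqueness of the intermediate Fiedler vectors is required, and one simply fixes a choice at each step.

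The main obstacle is conceptual rather than computational: it is the precise meaning of the $\varphi_i$ appearing in the product. In contrast to the simultaneous estimate of Theorem~\ref{thm:algebraic-connectivity-multi}, where every $\varphi_i$ is a coordinate of the \emph{original} Fiedler vector, here each $\varphi_i$ must be the corresponding coordinate of the Fiedler vector of the intermediate graph $\mathsf{G}^{(i-1)}$, because the shift-and-prolongate argument at step $i$ has to be run inside $\mathsf{G}^{(i-1)}$, where the tested vector is a genuine eigenvector; the prolongated original Fiedler vector is generally \emph{not} an eigenvector of any intermediate Laplacian, so the single-vertex lemma cannot be applied to it directly. I would therefore make this interpretation explicit in the statement. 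A final point deserving a remark is that the telescoped bound is a priori order-dependent through the intermediate Fiedler components, whereas $\mathsf{G}_D$ and hence $a(\mathsf{G}_D)$ do not depend on the order of disaggregation; consequently the inequality in fact holds for every ordering of the $m$ vertices, and one is free to state it for the chosen labelling $1, \dots, m$.
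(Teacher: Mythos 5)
Your proof is correct and takes essentially the same route as the paper's: the paper likewise writes $a(\mathsf{G})/a(\mathsf{G}_D)$ as a telescoping product over intermediate graphs and applies Theorem~\ref{thm:algebraic-connectivity} to each single-vertex disaggregation step. Your explicit remark that each $\varphi_i$ must then be interpreted as a coordinate of the Fiedler vector of the \emph{intermediate} graph $\mathsf{G}^{(i-1)}$, rather than of the original Fiedler vector $\bm{\varphi}$, identifies a genuine subtlety that the paper's statement and one-line proof leave implicit, and stating it explicitly is an improvement.
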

\begin{proof}
Let $\mathsf{G}_D^i$ be the resulting graph after disaggregating the $i$-th vertex in the graph $\mathsf{G}_D^{i-1}$ and note that $\mathsf{G}_D = \mathsf{G}_D^m$. We have
\begin{equation*}
\frac{a(\mathsf{G})}{a(\mathsf{G}_D)} = \frac{\mathsf{a(G)}}{a(\mathsf{G}_D^1)} \times\frac{a(\mathsf{G}_D^1)}{a(\mathsf{G}_D^2)}  \times \cdots \times \frac{a(\mathsf{G}_D^{m-1})}{a(\mathsf{G}_D^m)}.
\end{equation*}
The result follows immediately from applying Theorem \ref{thm:algebraic-connectivity} on each pair of graphs $\mathsf{G}_D^{i}$ and $\mathsf{G}_D^{i+1}$.  
\end{proof}

\begin{remark}
A direct consequence of Theorem \ref{thm:algebraic-connectivity-multi-1} is $a(\mathsf{G}_D) \leq a(\mathsf{G})$.
\end{remark}

Similarly, we also have the following result concerning the minimal eigenvalue of the normalized graph Laplacian after disaggregating several vertices.  Denote $\mathsf{G}_D^0 = \mathsf{G}$, and denote the graph after disaggregating vertex $i$ by $\mathsf{G}_D^i$.  Note that $\mathsf{G}_D^m = \mathsf{G}_D$.  The local subgraph corresponding to disaggregating vertex $i$ is denoted by $\mathsf{G}_a^i$.

\begin{theorem}
Let $\nu_2$ be the second smallest eigenvalue of the normalized graph Laplacian associated with a simply connected graph $\mathsf{G}$ and $\nu_2^D$ be the second smallest eigenvalue of the normalized graph Laplacian corresponding to the disaggregated and simply connected graph $\mathsf{G}_D$. Suppose we disaggregated $m$ vertices and each of them are disaggregated into $d_i>1$ vertices, $i=1,2,\cdots,m$. We have
\begin{equation*}
\frac{\nu_2}{\nu_2^D} \geq \prod_{i=1}^m \left(  1 +  \frac{2 \omega_{\text{total}}(\mathsf{G}_D^{i-1})  \,\omega_{\text{total}}(\mathsf{G}_a^i)}{ \omega_{\text{total}}( \mathsf{G}_D^i)  }\phi_i^2    \right).
\end{equation*}
Consequently, we have $\nu_2^D =\alpha \nu_2$, where 
\begin{equation} \label{def:alpha}
\alpha := \left[   \prod_{i=1}^m \left(  1 +  \frac{2 \omega_{\text{total}}(\mathsf{G}_D^{i-1}) \omega_{\text{total}}(\mathsf{G}_a^i)}{ \omega_{\text{total}}( \mathsf{G}_D^i)  }\phi_i^2    \right) \right]^{-1} \leq 1.
\end{equation}
\end{theorem}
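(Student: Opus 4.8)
The plan is to mirror the proof of Theorem~\ref{thm:algebraic-connectivity-multi-1} almost verbatim, replacing the combinatorial single-vertex bound (Theorem~\ref{thm:algebraic-connectivity}) by its normalized analogue (Theorem~\ref{thm:eigenvalue_normalized}). The structural observation that makes this work is that disaggregating the $m$ chosen vertices simultaneously yields the same weighted graph as disaggregating them one at a time; since the intermediate graphs $\mathsf{G}_D^0 = \mathsf{G}, \mathsf{G}_D^1, \ldots, \mathsf{G}_D^m = \mathsf{G}_D$ are all simply connected, each has a strictly positive second normalized eigenvalue and the ratio $\nu_2/\nu_2^D$ telescopes.

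First I would write
$$\frac{\nu_2}{\nu_2^D} = \prod_{i=1}^m \frac{\nu_2(\mathsf{G}_D^{i-1})}{\nu_2(\mathsf{G}_D^i)},$$
where $\nu_2(\mathsf{H})$ denotes the second smallest normalized-Laplacian eigenvalue of $\mathsf{H}$. For each $i$, the graph $\mathsf{G}_D^i$ arises from $\mathsf{G}_D^{i-1}$ by disaggregating the single (original) vertex $i$ with local subgraph $\mathsf{G}_a^i$, so Theorem~\ref{thm:eigenvalue_normalized} applies directly to the pair $(\mathsf{G}_D^{i-1}, \mathsf{G}_D^i)$. Taking $(\nu, \bm{\phi})$ to be the eigenpair of the normalized Laplacian of $\mathsf{G}_D^{i-1}$ with $\nu = \nu_2(\mathsf{G}_D^{i-1})$, the prolongated vector $\widetilde{\bm{\phi}}$ of \eqref{def:phi} is $D_D$-orthogonal to $\bm{1}_N$ by the choice of $s$, hence admissible in the variational characterization of $\nu_2(\mathsf{G}_D^i)$; combining $\nu_2(\mathsf{G}_D^i) \leq \mathrm{RQ}(\widetilde{\bm{\phi}})$ with the identity \eqref{eqn:RQ_normalized} gives
$$\frac{\nu_2(\mathsf{G}_D^{i-1})}{\nu_2(\mathsf{G}_D^i)} \geq 1 + \frac{2\,\omega_{\text{total}}(\mathsf{G}_D^{i-1})\,\omega_{\text{total}}(\mathsf{G}_a^i)}{\omega_{\text{total}}(\mathsf{G}_D^i)}\,\phi_i^2.$$
Multiplying these $m$ inequalities yields the claimed lower bound on $\nu_2/\nu_2^D$. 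Since each factor is at least $1$, the product is at least $1$; defining $\alpha$ as its reciprocal via \eqref{def:alpha} then gives $\nu_2^D = \alpha \nu_2$ with $\alpha \leq 1$.

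The step needing the most care is the meaning of $\phi_i$ in the final product. It is \emph{not} a component of the Fiedler vector of the original graph $\mathsf{G}$, but rather the value, at original vertex $i$, of the Fiedler vector of the intermediate graph $\mathsf{G}_D^{i-1}$ in which vertex $i$ has not yet been split. These values genuinely change from stage to stage, because at finite internal edge weight the disaggregated copies of a previously split vertex need not share a common Fiedler value, so one cannot simply substitute the original eigenvector throughout. I would therefore fix the interpretation of $\phi_i$ explicitly before telescoping, ensuring that each invocation of Theorem~\ref{thm:eigenvalue_normalized} uses the eigenpair of the correct intermediate graph. Apart from this bookkeeping and the observation that the equality in \eqref{eqn:RQ_normalized} must be read through the variational principle as the inequality $\nu_2(\mathsf{G}_D^i) \leq \mathrm{RQ}(\widetilde{\bm{\phi}})$, the argument is the routine telescoping already used for Theorem~\ref{thm:algebraic-connectivity-multi-1}.
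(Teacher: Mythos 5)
Your proposal is correct and follows exactly the paper's approach: the paper's entire proof is the single sentence that the result ``follows by applying Theorem~\ref{thm:eigenvalue_normalized} recursively,'' which is precisely your telescoping over the intermediate graphs $\mathsf{G}_D^0, \ldots, \mathsf{G}_D^m$. Your two points of added care --- that $\phi_i$ must be read as the eigenvector value on the intermediate graph $\mathsf{G}_D^{i-1}$ rather than on $\mathsf{G}$, and that the identity \eqref{eqn:RQ_normalized} enters through the variational principle as $\nu_2(\mathsf{G}_D^i) \leq \mathrm{RQ}(\widetilde{\bm{\phi}})$ --- are exactly the details the paper leaves implicit, and they are resolved correctly.
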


\begin{proof}
The result follows by applying Theorem \ref{thm:eigenvalue_normalized} recursively.
\end{proof}

Based on the estimates on the eigenvalues of normalized graph Laplacian, we can estimate the Cheeger constants as follows.

\begin{theorem}\label{thm:Cheeger constant-1}
For the Cheeger constant of the original graph $\mathsf{G}$ and the disaggregated and simply connected graph $\mathsf{G}_D$, we have
\begin{equation*}
h(\mathsf{G}_D) \leq \sqrt{1 - (1 - 2  \alpha h(\mathsf{G}))^2 },
\end{equation*}
where $\alpha$ is defined by \eqref{def:alpha}. If $h(\mathsf{G}) \geq \frac{4 \alpha}{ 4 \alpha^2 + 1}$, then $h(\mathsf{G}_D) \leq h(\mathsf{G})$.
\end{theorem}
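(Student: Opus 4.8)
The plan is to run the proof of Theorem~\ref{thm:Cheeger constant} essentially verbatim, since the theorem immediately preceding has already distilled the entire multiple-vertex disaggregation into the single scalar identity $\nu_2^D = \alpha \nu_2$ with $\alpha \leq 1$, where $\alpha$ is now the telescoping product \eqref{def:alpha} rather than a single factor. The Cheeger-type argument only ever consumes this scalar relation together with the two-sided Cheeger inequality \eqref{ine:cheeger}, and is blind to whether $\alpha$ arose from one disaggregation or from $m$ of them; thus no new structural input is required and the chain of inequalities transfers directly.

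Concretely, I would first apply the lower Cheeger bound of \eqref{ine:cheeger} to the disaggregated graph, $1 - \sqrt{1 - h(\mathsf{G}_D)^2} \leq \nu_2^D$, and rearrange to $1 - \nu_2^D \leq \sqrt{1 - h(\mathsf{G}_D)^2}$. Squaring (valid once one records $1 - \nu_2^D \geq 0$) gives $h(\mathsf{G}_D) \leq \sqrt{1 - (1 - \nu_2^D)^2}$. Substituting $\nu_2^D = \alpha \nu_2$ and then invoking the upper Cheeger bound $\nu_2 \leq 2 h(\mathsf{G})$ on the original graph, combined with the monotonicity of $x \mapsto 1 - (1-x)^2$ on $[0,1]$, yields the claimed estimate $h(\mathsf{G}_D) \leq \sqrt{1 - (1 - 2\alpha h(\mathsf{G}))^2}$. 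For the second assertion I would compare this bound against $h(\mathsf{G})$ by squaring: the inequality $1 - (1 - 2\alpha h(\mathsf{G}))^2 \leq h(\mathsf{G})^2$ expands and simplifies, after cancelling the constant term, to $4\alpha h(\mathsf{G}) \leq (1 + 4\alpha^2) h(\mathsf{G})^2$; dividing by $h(\mathsf{G}) > 0$ reproduces the stated threshold $h(\mathsf{G}) \geq \tfrac{4\alpha}{4\alpha^2 + 1}$.

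The one place where I would slow down rather than compute mechanically is the bookkeeping of the sign and range conditions that license the two squaring steps, since $x \mapsto 1 - (1-x)^2 = 2x - x^2$ is increasing only on $[0,1]$. I must check that the quantities $\alpha \nu_2$ and $2\alpha h(\mathsf{G})$ remain in the range where monotonicity points the right way, so that replacing $\nu_2$ by its Cheeger upper bound preserves the inequality. Because $\alpha \leq 1$ and the normalized-Laplacian quantities of interest lie in $[0,1]$ in the regime under consideration (exactly as is implicitly used in the single-vertex Theorem~\ref{thm:Cheeger constant}), these conditions hold, and the result follows by substituting the product $\alpha$ of \eqref{def:alpha} for the single-disaggregation factor throughout that earlier proof.
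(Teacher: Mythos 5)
Your proposal is correct and is essentially identical to the paper's argument: the paper's proof of Theorem~\ref{thm:Cheeger constant-1} simply invokes the proof of Theorem~\ref{thm:Cheeger constant} verbatim, consuming only the scalar relation $\nu_2^D = \alpha\nu_2$ (with $\alpha$ now the product in \eqref{def:alpha}) together with the Cheeger inequality \eqref{ine:cheeger}, exactly as you do. Your explicit attention to the sign and range conditions licensing the squaring steps is a point the paper leaves implicit, but it does not change the route.
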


\begin{proof}
The proof is the same as the proof of Theorem \ref{thm:Cheeger constant}.
\end{proof}

\section{Preconditioning Using Disaggregated Graph}

We aim to show eigenvalue interlacing
between $A$ and a new operator, which is obtained by scaling $A_D$
appropriately.

We can rescale $P$ by introducing $\widetilde{P}= D_s P$, where 
\begin{equation}\label{def:D_s}
D_s = \operatorname{diag}(d^{-1/2}_1I_{d_1 \times d_1},\ldots , d^{-1/2}_m I_{d_m \times d_m}, I_{n0 \times n0})
\end{equation}
is a diagonal scaling matrix, giving us
$\widetilde{P}^T\widetilde{P}=I$.  Based on the scaled prolongation,
we are able to show the eigenvalues of diagonal scaled matrix
\begin{equation} \label{def:tildeA_D}
\widetilde{A}_D:= D_s^{-1} A_D D_s^{-1}
\end{equation}
interlaces with $A$.  First, let us recall the interlacing theorem.
 
\begin{theorem}[Interlacing Theorem \cite{Courant.R;Hilbert.D1924a},
  Vol. 1, Chap. I] \label{thm:interlacing} Let
  $S \in \mathbb{R}^{N \times n}$ be such that
  $S^T S = I_{n \times n}$, $n < N$ and let
  $B \in \mathbb{R}^{N \times N}$ be symmetric, with eigenvalues
  $\lambda_1 \le \lambda_2 \le ... \le \lambda_n $. Define
  $A = S^T B S$ and let $A$ have eigenvalues
  $\mu_1\le \mu_2 \le ... \le \mu_m$. Then
  $ \lambda_i \le \mu_i \le \lambda_{n-m+i}.$
\end{theorem}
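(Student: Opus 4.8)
The plan is to derive both interlacing inequalities from the Courant--Fischer min--max characterization of eigenvalues, using that the hypothesis $S^T S = I$ makes $S$ an isometric embedding of the compressed space into the ambient space. Throughout I read the ambient dimension (the size of $B$) as $n$ and the compressed dimension (the size of $A$) as $m$, consistent with the eigenvalue indexing $\lambda_1 \le \cdots \le \lambda_n$ and $\mu_1 \le \cdots \le \mu_m$ in the statement, so that $S^T S = I_m$ and the column space $\mathcal{R}(S) \subseteq \mathbb{R}^n$ is an $m$-dimensional subspace. The first step is the elementary identity, valid for every $\bm{y}\neq\bm{0}$,
\[
\frac{\langle A \bm{y}, \bm{y}\rangle}{\langle \bm{y}, \bm{y}\rangle}
= \frac{\langle S^T B S \bm{y}, \bm{y}\rangle}{\langle \bm{y}, \bm{y}\rangle}
= \frac{\langle B (S\bm{y}), S\bm{y}\rangle}{\langle S\bm{y}, S\bm{y}\rangle},
\]
where $\langle \bm{y},\bm{y}\rangle = \langle S\bm{y}, S\bm{y}\rangle$ follows from $S^T S = I$. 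Thus the Rayleigh quotient of $A$ at $\bm{y}$ equals that of $B$ at $S\bm{y}$, which lets me transfer every extremal problem for $A$ into one for $B$ restricted to vectors in $\mathcal{R}(S)$.

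For the lower bound $\lambda_i \le \mu_i$ I would invoke the min--max form
\[
\mu_i = \min_{\dim W = i}\ \max_{\bm{0}\neq \bm{y}\in W} \frac{\langle B(S\bm{y}), S\bm{y}\rangle}{\langle S\bm{y}, S\bm{y}\rangle},
\]
the minimum being over $i$-dimensional subspaces $W$ of the compressed space. Because $S$ is injective and preserves inner products, $W \mapsto S W$ is a bijection between the $i$-dimensional subspaces of the compressed space and the $i$-dimensional subspaces of $\mathcal{R}(S)$, so the right-hand side is the minimum of the $B$-Rayleigh quotient taken only over those $i$-dimensional subspaces of $\mathbb{R}^n$ that lie in $\mathcal{R}(S)$. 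Minimizing over this subfamily can only raise the value relative to minimizing over all $i$-dimensional subspaces of $\mathbb{R}^n$, and the latter is exactly $\lambda_i$ by Courant--Fischer; hence $\mu_i \ge \lambda_i$.

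For the upper bound $\mu_i \le \lambda_{n-m+i}$ I would run the dual characterization, writing $\mu_i$ as a max--min over $(m-i+1)$-dimensional subspaces of the compressed space and transporting them via the same bijection to $(m-i+1)$-dimensional subspaces of $\mathcal{R}(S)$. The unrestricted max--min of the $B$-Rayleigh quotient over all $(m-i+1)$-dimensional subspaces of $\mathbb{R}^n$ equals $\lambda_{n-(m-i+1)+1} = \lambda_{n-m+i}$, while restricting the outer maximization to subspaces contained in $\mathcal{R}(S)$ can only lower it; hence $\mu_i \le \lambda_{n-m+i}$. Combining the two estimates gives the interlacing.

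The one genuinely nonroutine ingredient --- and the step I expect to require the most care --- is the subspace dictionary underlying both bounds: I must check that $S$ sends each $k$-dimensional subspace to a $k$-dimensional subspace (dimension preservation, from injectivity), that every $k$-dimensional subspace of $\mathcal{R}(S)$ is realized as $SW$ for some $W$ (so that the transported family is exactly the $k$-dimensional subspaces of $\mathcal{R}(S)$), and that the Rayleigh quotient is invariant under $\bm{y}\mapsto S\bm{y}$, which is precisely where the full strength of $S^T S = I$, and not merely injectivity of $S$, is used. Once this correspondence is in place, each inequality reduces to the trivial monotonicity principle that shrinking the admissible family raises a minimum and lowers a maximum.
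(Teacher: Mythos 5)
Your proof is correct: the two-sided Courant--Fischer min--max argument, combined with the dictionary that $S^TS=I$ makes $W\mapsto SW$ a Rayleigh-quotient-preserving bijection between $k$-dimensional subspaces of the compressed space and $k$-dimensional subspaces of $\mathcal{R}(S)$, is exactly the classical proof of this interlacing theorem, and the paper itself supplies no proof at all --- it imports the result by citation to Courant--Hilbert, whose argument is the same min--max one you give. Your reinterpretation of the statement's inconsistent dimensions (reading $B$ as $n\times n$ and $A$ as $m\times m$ so that the conclusion $\lambda_i \le \mu_i \le \lambda_{n-m+i}$ is the standard Cauchy interlacing inequality) is also the right repair of the typos in the statement as printed.
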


From here, we have the following.
\begin{theorem}
  Let $A$ have eigenvalues
  $\lambda_1(A) \le \lambda_2(A) \le ... \le \lambda_n(A)$ and
  $\widetilde{A}_D=D_s^{-1}A_DD_s^{-1}$ have eigenvalues
  $\lambda_1(\widetilde{A}_D) \le \lambda_2(\widetilde{A}_D) \le
  ... \le \lambda_N(\widetilde{A}_D)$. Then
$$ \lambda_i(\widetilde{A}_D) \le \lambda_i(A) \le \lambda_{N-n+i}(\widetilde{A}_D). $$
\end{theorem}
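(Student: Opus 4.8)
The plan is to apply the Interlacing Theorem (Theorem \ref{thm:interlacing}) directly, with the rescaled prolongation $\widetilde{P} = D_s P$ playing the role of the isometry $S$. First I would verify that $\widetilde{P}$ satisfies the hypothesis $\widetilde{P}^T \widetilde{P} = I_{n \times n}$; this is already asserted in the text immediately after the definition \eqref{def:D_s}, but I would confirm it by the short computation $\widetilde{P}^T \widetilde{P} = P^T D_s^{-2} \cdot D_s^{2} \cdots$, more precisely $\widetilde{P}^T \widetilde{P} = P^T D_s D_s P$ where the inner $D_s^2$ interacts with the block structure of $P$. The key fact is $P_m^T P_m = \operatorname{diag}(d_1, \ldots, d_m)$, so after inserting the scaling $d_i^{-1/2}$ on each block, the diagonal entries become $d_i^{-1/2} \cdot d_i \cdot d_i^{-1/2} = 1$, and the identity block is untouched; hence $\widetilde{P}^T \widetilde{P} = I_n$.

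**Identifying $A$ as a compression of $\widetilde{A}_D$.**

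Next I would show that $A$ is exactly the compression $S^T B S$ with $S = \widetilde{P}$ and $B = \widetilde{A}_D$. Using the definition \eqref{def:tildeA_D} of $\widetilde{A}_D = D_s^{-1} A_D D_s^{-1}$ together with $\widetilde{P} = D_s P$, I compute
\[
\widetilde{P}^T \widetilde{A}_D \widetilde{P} = (D_s P)^T (D_s^{-1} A_D D_s^{-1})(D_s P) = P^T D_s D_s^{-1} A_D D_s^{-1} D_s P = P^T A_D P = A,
\]
where the last equality is the relation \eqref{eqn:A-A_D} from the lemma preceding this section. This is the crucial algebraic step, and it is essentially immediate once the cancellation of the $D_s$ factors is observed; the diagonal scalings introduced in $\widetilde{A}_D$ are precisely canceled by the scalings building $\widetilde{P}$ out of $P$.

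**Invoking interlacing and matching the index convention.**

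With $\widetilde{P}^T \widetilde{P} = I_n$ and $A = \widetilde{P}^T \widetilde{A}_D \widetilde{P}$ established, I would apply Theorem \ref{thm:interlacing} with the substitution $B = \widetilde{A}_D$ (of size $N \times N$), $S = \widetilde{P}$, and $A$ (of size $n \times n$). The theorem yields $\lambda_i(\widetilde{A}_D) \le \lambda_i(A) \le \lambda_{N - n + i}(\widetilde{A}_D)$, which is exactly the claimed statement. The only point requiring care is aligning the notation of Theorem \ref{thm:interlacing}—where the large matrix has $N$ eigenvalues, the compressed matrix has $n$, and the bound reads $\lambda_i \le \mu_i \le \lambda_{N-n+i}$—with the roles here, so that the $\lambda$'s of the theorem are the eigenvalues of $\widetilde{A}_D$ and the $\mu$'s are those of $A$.

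**Main obstacle.**

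The proof is largely a bookkeeping exercise, so there is no deep obstacle; the one place to be vigilant is the index matching in the interlacing conclusion, since the statement of Theorem \ref{thm:interlacing} uses $m$ for the size of the compressed matrix and $n$ for the size of the ambient matrix, whereas in our application the ambient operator $\widetilde{A}_D$ has dimension $N$ and the compressed operator $A$ has dimension $n$. I would therefore take special care that the upper index $N - n + i$ corresponds correctly to the "$\lambda_{n-m+i}$" of the abstract theorem after this relabeling, to ensure the stated inequality is reproduced verbatim.
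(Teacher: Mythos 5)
Your proposal is correct and follows essentially the same route as the paper: establish $A = \widetilde{P}^T \widetilde{A}_D \widetilde{P}$ by cancellation of the $D_s$ factors, note $\widetilde{P}^T\widetilde{P} = I_{n\times n}$, and invoke the Interlacing Theorem. The only difference is that you spell out the verification of $\widetilde{P}^T\widetilde{P} = I$ and the index relabeling explicitly, which the paper leaves implicit.
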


\begin{proof}
From the above Lemma, we have
\[
A = P^T A_D P = 
P^T D_s D_s^{-1} A_D D_s^{-1} D_s P = 
\widetilde{P}^TD_s^{-1} A_D D_s^{-1} \widetilde{P} = \widetilde{P}^T \widetilde{A}_D \widetilde{P}.
\]
As $\widetilde{P}^T\widetilde{P}=I_{n \times n}$, by the Interlacing
Theorem \ref{thm:interlacing}, the eigenvalues of $A$ and
$\widetilde{A}_D$ interlace.
\end{proof}

We now discuss how to use the disaggregated graph $\mathsf{G}_D$ to
solve the graph Laplacian on the original graph $\mathsf{G}$.  Here,
we will use $\widetilde{A}_D$ as the auxiliary problem and design a
preconditioner based on the Fictitious Space Lemma
\cite{Nepomnyaschikh.S1992} and auxiliary space framework
\cite{Xu.J1996}.  Because $A$ and $\widetilde{A}_D$ are both symmetric positive
semi-definite, we first state the refined version of the Fictitious
Space Lemma proposed in
\cite{Dios.B;Brezzi.F;Marini.L;Xu.J;Zikatanov.L2014a}.

\begin{theorem}[Theorem 6.3 and 6.4 in \cite{Dios.B;Brezzi.F;Marini.L;Xu.J;Zikatanov.L2014a}]
Let $\widetilde{V}$ and $V$ be two Hilbert spaces and $\Pi: \widetilde{V} \mapsto V$ be a surjective map.  Suppose that $\widetilde{\mathcal{A}}: \widetilde{V} \mapsto \widetilde{V}'$ and $\mathcal{A}: V \mapsto V' $ are symmetric semi-definite operators. Moreover, suppose
\begin{align}
 & \Pi (N( \widetilde{\mathcal{A}})) = N(\mathcal{A}), \label{eqn:null-space} \\
 &\| \Pi \, \widetilde{v} \|_A \leq c_1 \| \widetilde{v} \|_{\widetilde{A}}, \quad \forall \ \widetilde{v} \in \widetilde{V}, \label{ine:upper} \\
& \text{for any} \ v\in V \ \text{there exists} \ \widetilde{v} \in \widetilde{V} \ \text{such that} \ \Pi \, \widetilde{v} = v \ \text{and} \  \| \widetilde{v} \|_{\widetilde{A}} \leq c_0 \| v \|_A, \label{ine:lower}
\end{align}
then for any symmetric positive definite operator $\widetilde{\mathcal{B}}: \widetilde{V}' \mapsto \widetilde{V}$, we have that for $\mathcal{B} = \Pi \, \widetilde{\mathcal{B}} \, \Pi^T$,
\begin{equation*}
\kappa(\mathcal{BA}) \leq \left( \frac{c_1}{c_0} \right)^2 \kappa(\widetilde{\mathcal{B}} \widetilde{\mathcal{A}}).
\end{equation*}
\end{theorem}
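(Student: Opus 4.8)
The plan is to characterize both condition numbers spectrally and reduce the claim to two one-sided Rayleigh-quotient estimates connected through the map $\Pi$. Since $\mathcal{A}$ and $\widetilde{\mathcal{A}}$ are only semi-definite, I would first use the null-space compatibility \eqref{eqn:null-space} together with the surjectivity of $\Pi$ to argue that $\mathcal{B}\mathcal{A}$ and $\widetilde{\mathcal{B}}\widetilde{\mathcal{A}}$ act invertibly on the orthogonal complements $N(\mathcal{A})^\perp$ and $N(\widetilde{\mathcal{A}})^\perp$, so that $\kappa$ is the ratio of the largest to the smallest \emph{nonzero} eigenvalue. Writing $\|v\|_A^2=\langle\mathcal{A}v,v\rangle$, the extreme eigenvalues of $\mathcal{B}\mathcal{A}$ are then
\[
\lambda_{\max}(\mathcal{B}\mathcal{A}) = \sup_{v}\frac{\|v\|_A^2}{\langle \mathcal{B}^{-1}v,v\rangle}, \qquad \lambda_{\min}(\mathcal{B}\mathcal{A}) = \inf_{v}\frac{\|v\|_A^2}{\langle \mathcal{B}^{-1}v,v\rangle},
\]
and analogously for $\widetilde{\mathcal{B}}\widetilde{\mathcal{A}}$ in the $\widetilde{V}$ variables. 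The whole proof reduces to comparing these Rayleigh quotients across $\Pi$.

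The key tool is a dual-norm identity for the composed preconditioner $\mathcal{B}=\Pi\widetilde{\mathcal{B}}\Pi^T$, namely
\[
\langle \mathcal{B}^{-1}v,v\rangle = \min_{\Pi\widetilde{v}=v}\langle \widetilde{\mathcal{B}}^{-1}\widetilde{v},\widetilde{v}\rangle .
\]
I would prove this by exhibiting the explicit minimizer $\widetilde{v}^\ast = \widetilde{\mathcal{B}}\,\Pi^T\mathcal{B}^{-1}v$, which satisfies $\Pi\widetilde{v}^\ast=v$ and $\langle\widetilde{\mathcal{B}}^{-1}\widetilde{v}^\ast,\widetilde{v}^\ast\rangle=\langle\mathcal{B}^{-1}v,v\rangle$, while for any other admissible preimage the quadratic form is no smaller by a one-line orthogonality (Lagrange-multiplier) argument. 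Surjectivity of $\Pi$ guarantees the constraint set is nonempty.

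With this identity both estimates are short; set $\|\widetilde{v}\|_{\widetilde{\mathcal{B}}^{-1}}^2:=\langle\widetilde{\mathcal{B}}^{-1}\widetilde{v},\widetilde{v}\rangle$. For the upper bound, given $v$ I take the minimizer $\widetilde{v}^\ast$, so $\langle\mathcal{B}^{-1}v,v\rangle=\|\widetilde{v}^\ast\|_{\widetilde{\mathcal{B}}^{-1}}^2$ and $\Pi\widetilde{v}^\ast=v$; continuity \eqref{ine:upper} together with $\|\widetilde{v}^\ast\|_{\widetilde{A}}^2\le \lambda_{\max}(\widetilde{\mathcal{B}}\widetilde{\mathcal{A}})\,\|\widetilde{v}^\ast\|_{\widetilde{\mathcal{B}}^{-1}}^2$ then gives $\|v\|_A^2\le c_1^2\,\lambda_{\max}(\widetilde{\mathcal{B}}\widetilde{\mathcal{A}})\,\langle\mathcal{B}^{-1}v,v\rangle$, i.e. $\lambda_{\max}(\mathcal{B}\mathcal{A})\le c_1^2\,\lambda_{\max}(\widetilde{\mathcal{B}}\widetilde{\mathcal{A}})$. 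For the lower bound, given $v$ I use the stable preimage from \eqref{ine:lower}, bound $\langle\mathcal{B}^{-1}v,v\rangle\le\|\widetilde{v}\|_{\widetilde{\mathcal{B}}^{-1}}^2\le \lambda_{\min}(\widetilde{\mathcal{B}}\widetilde{\mathcal{A}})^{-1}\|\widetilde{v}\|_{\widetilde{A}}^2$, and invoke the $c_0$ estimate to control $\|\widetilde{v}\|_{\widetilde{A}}$ by $\|v\|_A$, producing a lower bound on $\lambda_{\min}(\mathcal{B}\mathcal{A})$ in terms of $\lambda_{\min}(\widetilde{\mathcal{B}}\widetilde{\mathcal{A}})$ and $c_0$. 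Dividing the two bounds yields $\kappa(\mathcal{B}\mathcal{A})\le (c_1/c_0)^2\,\kappa(\widetilde{\mathcal{B}}\widetilde{\mathcal{A}})$.

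The main obstacle I anticipate is not the algebra but the correct handling of the semi-definite setting: I must verify that \eqref{eqn:null-space} makes the Rayleigh-quotient characterization legitimate (the extrema are attained on $N(\mathcal{A})^\perp$, and the minimizing preimage in the dual-norm identity can be chosen away from $N(\widetilde{\mathcal{A}})$), and that $\mathcal{B}=\Pi\widetilde{\mathcal{B}}\Pi^T$ is genuinely invertible where it is used. Establishing the dual-norm identity rigorously—confirming that $\widetilde{v}^\ast$ is the true minimizer and that $\mathcal{B}^{-1}$ is well defined on the relevant subspace—is the one step requiring care; everything after it follows directly from \eqref{ine:upper} and \eqref{ine:lower}.
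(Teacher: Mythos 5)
The first thing to note is that the paper contains no proof of this statement: it is quoted (as Theorems 6.3 and 6.4) from the cited reference of Ayuso de Dios, Brezzi, Marini, Xu, and Zikatanov, and is then only applied with $\Pi=\widetilde{P}^T$. So the only meaningful comparison is against the standard proof of the fictitious space lemma, and that is precisely what you reconstruct: the minimization identity $\langle \mathcal{B}^{-1}v,v\rangle=\min_{\Pi\widetilde{v}=v}\langle\widetilde{\mathcal{B}}^{-1}\widetilde{v},\widetilde{v}\rangle$ with explicit minimizer $\widetilde{v}^{\ast}=\widetilde{\mathcal{B}}\,\Pi^{T}\mathcal{B}^{-1}v$ (your orthogonality argument is correct: any other preimage differs by $\widetilde{w}$ with $\Pi\widetilde{w}=0$, and the cross term $\langle\widetilde{\mathcal{B}}^{-1}\widetilde{v}^{\ast},\widetilde{w}\rangle=\langle\mathcal{B}^{-1}v,\Pi\widetilde{w}\rangle$ vanishes), followed by the two one-sided Rayleigh-quotient estimates. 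Your well-definedness worry about $\mathcal{B}$ is easily discharged: surjectivity of $\Pi$ makes $\Pi^{T}$ injective, so $\mathcal{B}=\Pi\widetilde{\mathcal{B}}\Pi^{T}$ is genuinely positive definite. The semi-definite technicalities you flag are real but closable using \eqref{eqn:null-space}: for the lower bound, replace the stable preimage $\widetilde{v}$ by its component $\widetilde{v}_1$ that is $\widetilde{\mathcal{B}}^{-1}$-orthogonal to $N(\widetilde{\mathcal{A}})$; then $\Pi\widetilde{v}_1=v-z$ with $z\in N(\mathcal{A})$, and for $v$ that is $\mathcal{B}^{-1}$-orthogonal to $N(\mathcal{A})$ one has $\langle\mathcal{B}^{-1}v,v\rangle\le\langle\mathcal{B}^{-1}(v-z),v-z\rangle$, so the chain of inequalities survives restriction to nonzero eigenvalues.

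The one genuine defect is the final constant. Your upper bound gives $\lambda_{\max}(\mathcal{B}\mathcal{A})\le c_1^{2}\lambda_{\max}(\widetilde{\mathcal{B}}\widetilde{\mathcal{A}})$, and your lower bound, using \eqref{ine:lower} in the form $\|\widetilde{v}\|_{\widetilde{A}}\le c_0\|v\|_A$ as written in the statement, gives $\lambda_{\min}(\mathcal{B}\mathcal{A})\ge\lambda_{\min}(\widetilde{\mathcal{B}}\widetilde{\mathcal{A}})/c_0^{2}$. Dividing these yields $\kappa(\mathcal{B}\mathcal{A})\le (c_0c_1)^{2}\,\kappa(\widetilde{\mathcal{B}}\widetilde{\mathcal{A}})$, \emph{not} $(c_1/c_0)^{2}\,\kappa(\widetilde{\mathcal{B}}\widetilde{\mathcal{A}})$, so the last sentence of your argument is a non sequitur. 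In fact, with the hypotheses exactly as transcribed here, the claimed bound cannot hold: \eqref{ine:lower} only weakens as $c_0$ increases, while $(c_1/c_0)^{2}$ shrinks, so letting $c_0\to\infty$ would force $\kappa(\mathcal{B}\mathcal{A})=0$, which is absurd. The mismatch is a normalization slip in the statement itself; in the cited reference the stability condition reads $c_0\|\widetilde{v}\|_{\widetilde{A}}\le\|v\|_A$, and with that reading your exact chain of inequalities gives $\lambda_{\min}(\mathcal{B}\mathcal{A})\ge c_0^{2}\lambda_{\min}(\widetilde{\mathcal{B}}\widetilde{\mathcal{A}})$ and hence precisely $(c_1/c_0)^{2}$. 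You should either prove the $(c_0c_1)^{2}$ bound for the statement as written or note the corrected hypothesis; in this paper's application $c_0=1$, so the two constants coincide and nothing downstream is affected.
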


Applying the above theory to our disaggregation framework, we take $\mathcal{A} = A$, $\widetilde{\mathcal{A}} = \widetilde{A}_D$, and $\Pi = \widetilde{P}^T$.  Noting that the null space of $\widetilde{A}_D$ is spanned by $D_s \bm{1}_N$, we have
$$
\widetilde{P}^T D_s \bm{1}_N = P^T D_s^2 \bm{1}_N = \bm{1}_n 
$$
which verifies \eqref{eqn:null-space}.  Naturally, $\widetilde{P}^T$ is surjective. Using a preconditioner $\widetilde{B}_D$ of $\widetilde{A}_D$, we can define a preconditioner 
\begin{equation*}
B = \widetilde{P}^T \widetilde{B}_D \widetilde{P}
\end{equation*}
for $A$. We give the following results concerning the quality of the preconditioner $B$.

\begin{corollary}\label{coro:prec-B}
Let $A$ be the graph Laplacian corresponding to the graph $\mathsf{G}$ and $A_D$ be the graph Laplacian corresponding to the disaggregated and simply connected graph $\mathsf{G}_D$. Let $D_s$ be defined by \eqref{def:D_s} and $\widetilde{A}_D$ be defined by \eqref{def:tildeA_D}.  If
\begin{equation} \label{ine:upper-c1}
\| \widetilde{P}^T \widetilde{\bm{v}} \|_A \leq c_1 \| \widetilde{\bm{v}} \|_{\widetilde{A}_D}, \quad \forall \ \widetilde{v} \in \widetilde{V}
\end{equation}
and for any $\bm{v} \in V$,  there exist a $\widetilde{\bm{v}} \in \widetilde{V}$ such that $\widetilde{P}^T \widetilde{\bm{v}} = \bm{v}$ and
\begin{equation} \label{ine:lower-c0}
\| \widetilde{\bm{v}} \|_{\widetilde{A}_D} \leq c_0 \| \bm{v} \|_A.
\end{equation}
Then for the preconditioner $B = \widetilde{P}^T \widetilde{B}_D \widetilde{P}$, we have
\begin{equation*}
\kappa(BA) \leq \left( \frac{c_1}{c_0} \right)^2 \kappa(\widetilde{B}_D \widetilde{A}_D).
\end{equation*}
\end{corollary}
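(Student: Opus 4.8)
The plan is to invoke the refined version of the Fictitious Space Lemma stated immediately above, with the identifications $\mathcal{A} = A$, $\widetilde{\mathcal{A}} = \widetilde{A}_D$, $V = \mathbb{R}^n$, $\widetilde{V} = \mathbb{R}^N$, $\Pi = \widetilde{P}^T$, and $\widetilde{\mathcal{B}} = \widetilde{B}_D$. The key matching observation is that $\Pi^T = \widetilde{P}$, so the operator produced by the lemma, $\mathcal{B} = \Pi\,\widetilde{\mathcal{B}}\,\Pi^T$, is exactly $\widetilde{P}^T \widetilde{B}_D \widetilde{P} = B$. Consequently the lemma's conclusion $\kappa(\mathcal{B}\mathcal{A}) \le (c_1/c_0)^2\,\kappa(\widetilde{\mathcal{B}}\widetilde{\mathcal{A}})$ is, after this substitution, precisely the claimed bound on $\kappa(BA)$. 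It therefore suffices to verify that the three hypotheses of the lemma hold in the present setting.

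First I would record that $\Pi = \widetilde{P}^T$ is surjective: since $\widetilde{P} = D_s P$ with $P$ of full column rank (by its block structure in \eqref{def:Prolongation}) and $D_s$ invertible (see \eqref{def:D_s}), the matrix $\widetilde{P}^T$ maps $\mathbb{R}^N$ onto $\mathbb{R}^n$. Next, for the null-space compatibility \eqref{eqn:null-space}, I would reuse the computation already carried out above: $N(\widetilde{A}_D) = \operatorname{span}\{D_s \bm{1}_N\}$ while $N(A) = \operatorname{span}\{\bm{1}_n\}$, and the identity $\widetilde{P}^T D_s \bm{1}_N = P^T D_s^2 \bm{1}_N = \bm{1}_n$ shows that $\Pi\big(N(\widetilde{A}_D)\big) = N(A)$. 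Finally, the continuity estimate \eqref{ine:upper} and the stable-decomposition estimate \eqref{ine:lower} required by the lemma are, verbatim, the standing hypotheses \eqref{ine:upper-c1} and \eqref{ine:lower-c0} of the corollary, with the same constants $c_1$ and $c_0$.

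With all three hypotheses in place, the refined Fictitious Space Lemma applies directly and yields the stated condition-number estimate. I do not expect any genuine analytic obstacle here: the only care needed is the bookkeeping that identifies $\mathcal{B} = \Pi\widetilde{\mathcal{B}}\Pi^T$ with $B = \widetilde{P}^T \widetilde{B}_D \widetilde{P}$ through $\Pi^T = \widetilde{P}$, plus confirming surjectivity and the null-space condition. The substantive difficulty has, in effect, been moved into the hypotheses: the quantitative bounds \eqref{ine:upper-c1} and \eqref{ine:lower-c0} are assumed rather than derived. I would therefore remark that if one wished to make the result unconditional, the hard part would be to establish \eqref{ine:upper-c1} and \eqref{ine:lower-c0} with \emph{explicit} constants $c_0, c_1$ depending on the disaggregation structure and the internal edge weights; that estimation, and not the present corollary, is where the real work lies.
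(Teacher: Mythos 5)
Your proposal is correct and takes essentially the same route as the paper: the paper also proves this corollary by invoking the refined Fictitious Space Lemma with $\mathcal{A}=A$, $\widetilde{\mathcal{A}}=\widetilde{A}_D$, $\Pi=\widetilde{P}^T$, verifying the null-space condition via $\widetilde{P}^T D_s \bm{1}_N = P^T D_s^2 \bm{1}_N = \bm{1}_n$ and surjectivity of $\widetilde{P}^T$, and then reading off \eqref{ine:upper-c1} and \eqref{ine:lower-c0} as the lemma's two remaining hypotheses. The only difference is that you make explicit some bookkeeping the paper leaves implicit, namely the identification $\mathcal{B}=\Pi\,\widetilde{\mathcal{B}}\,\Pi^T=\widetilde{P}^T\widetilde{B}_D\widetilde{P}=B$ and the full-column-rank argument for surjectivity.
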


We need to verify that conditions \eqref{ine:upper-c1} and
\eqref{ine:lower-c0} hold for $\widetilde{P} = D_s P$.  For condition
\eqref{ine:lower-c0}, we choose
$\widetilde{\bm{v}} = \widetilde{P}^T \bm{v}$ for any $\bm{v} \in V$,
giving
$\widetilde{P}^T \widetilde{\bm{v}} = \widetilde{P}^T \widetilde{P}
\bm{v} = \bm{v}$ since $\widetilde{P}^T \widetilde{P} = I$.  Note that
\begin{equation}\label{eqn:lower-c0}
\| \widetilde{\bm{v}} \|^2_{\widetilde{A}_D} = \langle \widetilde{A}_D \widetilde{P} \bm{v}, \widetilde{P} \bm{v} \rangle = \| \bm{v} \|_A^2,
\end{equation}
which implies condition \eqref{ine:lower-c0} holds with $c_0 = 1$.

To show that condition \eqref{ine:upper-c1} holds, we use the following result. 
\begin{lemma}\label{taylor}
Let $A\in \mathbb{R}^{n\times n}$ be a graph Laplacian corresponding to a connected graph with $n$ vertices. For all 
$i\in\{1,\ldots, n\}$ and $\bm{u} \in \mathbb{R}^n$ we have
\begin{equation*}
\frac{1}{n}(\bm{u},\bm{1}_n)-(\bm{u},\bm{e}_i) = \frac1n (A_i^{-1} \bm{1}_n, A \bm{u}),
\end{equation*}
where $A_i = (A+\bm{e}_i\bm{e}_i^T)$. 
\end{lemma}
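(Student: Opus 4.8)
The plan is to exploit the rank-one structure of $A_i = A + \bm{e}_i\bm{e}_i^T$ together with the fact that, for a connected graph, $A$ is symmetric positive semi-definite with kernel exactly $\operatorname{span}(\bm{1}_n)$. First I would record the single observation that drives the whole computation: since $A\bm{1}_n = \bm{0}$ and $\bm{e}_i^T\bm{1}_n = 1$, we have
\[
A_i \bm{1}_n = A\bm{1}_n + \bm{e}_i(\bm{e}_i^T\bm{1}_n) = \bm{e}_i.
\]
Before inverting, I would check that $A_i$ is invertible: for any $\bm{x}\neq\bm{0}$ one has $\bm{x}^T A_i \bm{x} = \bm{x}^T A\bm{x} + (\bm{e}_i^T\bm{x})^2\ge 0$, with equality forcing $\bm{x}\in\operatorname{span}(\bm{1}_n)$ and $\bm{e}_i^T\bm{x}=0$ simultaneously, which is impossible unless $\bm{x}=\bm{0}$; hence $A_i$ is symmetric positive definite. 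Combined with the previous display this yields the key identity $A_i^{-1}\bm{e}_i = \bm{1}_n$.

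Next I would massage the right-hand side. Using symmetry of $A_i^{-1}$ I would rewrite $(A_i^{-1}\bm{1}_n, A\bm{u}) = (\bm{1}_n, A_i^{-1}A\bm{u})$, and then substitute $A = A_i - \bm{e}_i\bm{e}_i^T$ to get
\[
A_i^{-1}A = I - A_i^{-1}\bm{e}_i\bm{e}_i^T = I - \bm{1}_n\bm{e}_i^T,
\]
where the last step is exactly the key identity. Applying this to $\bm{u}$ gives $A_i^{-1}A\bm{u} = \bm{u} - (\bm{e}_i^T\bm{u})\bm{1}_n$, and pairing with $\bm{1}_n$ (using $\bm{1}_n^T\bm{1}_n = n$) produces $(\bm{1}_n, A_i^{-1}A\bm{u}) = (\bm{1}_n,\bm{u}) - n(\bm{e}_i,\bm{u})$. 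Dividing by $n$ recovers the claimed left-hand side $\tfrac1n(\bm{u},\bm{1}_n) - (\bm{u},\bm{e}_i)$, completing the argument.

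I do not expect a genuine obstacle here; the entire content is the observation $A_i\bm{1}_n = \bm{e}_i$, equivalently $A_i^{-1}\bm{e}_i = \bm{1}_n$, which collapses the rank-one correction. The only point requiring a word of care is the invertibility of $A_i$, which relies on the connectivity hypothesis through $\dim\ker A = 1$; without connectivity, $\bm{1}_n$ need not span the kernel and $A_i$ could be singular. Everything else is bookkeeping with symmetry and the identity $A = A_i - \bm{e}_i\bm{e}_i^T$.
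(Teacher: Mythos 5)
Your proof is correct, and its core is the same as the paper's: both arguments hinge on the observation $A_i\bm{1}_n = \bm{e}_i$ (equivalently $A_i^{-1}\bm{e}_i = \bm{1}_n$) together with the substitution $A = A_i - \bm{e}_i\bm{e}_i^T$, after which the rank-one term collapses and the inner products telescope; whether one writes this as $A_i^{-1}A = I - \bm{1}_n\bm{e}_i^T$ acting on $\bm{u}$ (as you do) or expands $(A_i^{-1}\bm{1}_n,(A_i-\bm{e}_i\bm{e}_i^T)\bm{u})$ directly (as the paper does) is an immaterial reshuffling via symmetry of $A_i^{-1}$. The one genuine point of divergence is how invertibility of $A_i$ is justified: the paper cites the classical fact that $A_i$ is an irreducibly diagonally dominant $M$-matrix (Varga), whereas you give a self-contained positive-definiteness argument, noting that $\bm{x}^TA_i\bm{x}=0$ forces $\bm{x}\in\operatorname{span}(\bm{1}_n)$ and $\bm{e}_i^T\bm{x}=0$ simultaneously. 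Your route is more elementary and makes explicit exactly where connectivity enters (through $\ker A = \operatorname{span}(\bm{1}_n)$); the paper's is shorter but outsources that insight to a cited theorem. Either is perfectly acceptable.
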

\begin{proof}
  First, we note that for all $i\in \{1,\ldots,n\}$ the matrices $A_i$
  are invertible, because they all are irreducibly diagonally dominant
  $M$-matrices. We refer to  Varga~\cite{2000VargaR-aa} for this classical
  result. 

  Next, observe that $A_i \bm{1}_n = \bm{e}_i$, and hence,
  $A_i^{-1}\bm{e}_i = \bm{1}_n$.  Therefore, we have that
\begin{eqnarray*}
(A_i^{-1} \bm{1}_n, A \bm{u}) & = & 
(A_i^{-1} \bm{1}_n, (A_i-\bm{e}_i\bm{e}_i^T) \bm{u}) 
 =   
 (\bm{1}_n, \bm{u}) - (\bm{u},\bm{e}_i)(A_i^{-1} \bm{1}_n, \bm{e}_i)\\
& = &  
 (\bm{1}_n, \bm{u}) - 
(\bm{u},\bm{e}_i)(\bm{1}_n, A_i^{-1} \bm{e}_i)  = 
 (\bm{1}_n, \bm{u}) - (\bm{u},\bm{e}_i)(\bm{1}_n, \bm{1}_n).
\end{eqnarray*}
As $(\bm{1}_n, \bm{1}_n)=n$, this completes the proof. 
\end{proof}
The result shown in Lemma~\ref{taylor} is also found
in~\cite[Lemma~3.2]{Brannick.J;Chen.Y;Kraus.J;Zikatanov.L.2013a}, but is included for completeness.

We now apply Lemma~\ref{taylor} to each disaggregated local subgraph
$\mathsf{G}_a^k = (V_a^k, E_a^k, \omega^k_a)$, $k=1,2,\cdots, m$, with
$\bm{u} = \widetilde{\bm{v}}_k$, the restriction of
$\widetilde{\bm{v}}$ on $\mathsf{G}_a^k$. For $j \in V_a^k$, we
have,
\begin{equation}\label{eqn:taylor-dis-k}
  \widetilde{v}_j = \frac{1}{d_k} \sum_{p \in V_a^k} \widetilde{v}_p - \frac{1}{d_k} \langle L_{k,j}^{-1} \bm{1}_{d_k}, L_k \widetilde{\bm{v}}_k \rangle,
\end{equation}
where $L_k$ is the unweighted graph Laplacian of the local graph $\mathsf{G}_a^k$ 
and $L_{k,j}$ is defined in accordance with Lemma~\ref{taylor}: 
$L_{k,j} = L_k + \bm{e}_j^k \left( \bm{e}^k_j \right)^T$, for $j \in V_a^k$.  
Setting $W_k^j := \frac{1}{d_k^2} \| L_{k,j}^{-1} \bm{1}_{d_k} \|^2_{L_k} $, $j \in V_a^k$, and denoting  
\begin{align*}
E_D^0 &:= \{  e=(i,j) \in E_D, i, j \in V^0  \}, \\
E_D^1 &:= \{  e=(i,j) \in E_D, i\in V^0, j\in V_a^k, k=1,2,\cdots,m  \}, \\
E_D^2 &:= \{  e=(i,j) \in E_D, i \in V_a^k, j\in V_a^\ell, k, \ell=1,2,\cdots,m, 
k \neq \ell \},
\end{align*}

 we are ready to present the following lemma related to the condition \eqref{ine:upper-c1}. 

\begin{lemma} \label{lem:upper-c1}
For each disaggregated local subgraph $\mathsf{G}_a^k$, if, for an edge $e' = (p,q) \in E_a^k$, we assign a weight $\omega_{e'}$ such that
\begin{equation}\label{def:weight}
\omega_{e'} \geq W_{e'} := (1+\epsilon^{-1}) \left[ \sum_{\substack{e = (i,j) \in E_D^1 \\ i\in V^0, \ j\in V_a^k}}  \omega_e W_k^j   +  2 \sum_{\ell=1}^{m} \left(  \sum_{\substack{e=(i,j)\in E_D^2 \\  i \in V_a^k, \ j \in V_a^\ell }} \omega_e W_k^i  + \sum_{\substack{e=(i,j)\in E_D^2 \\  i \in V_a^\ell, \ j \in V_a^k }} \omega_e W_k^j  \right) \right],
\end{equation}
then we have
\begin{equation}\label{ine:upper-epsilon} 
\| \widetilde{P}^T \widetilde{\bm{v}} \|^2_A \leq (1+\epsilon) \| \widetilde{\bm{v}} \|^2_{\widetilde{A}_D}, \quad \forall \ \widetilde{v} \in \widetilde{V},
\end{equation}
where $\epsilon > 0$.
\end{lemma}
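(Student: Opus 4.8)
The plan is to pass to the unscaled variable $\bm{w} := D_s^{-1}\widetilde{\bm{v}}$, for which $\|\widetilde{\bm{v}}\|^2_{\widetilde{A}_D} = \langle A_D \bm{w}, \bm{w}\rangle$ while the aggregated vector $\widetilde{P}^T\widetilde{\bm{v}} = P^T D_s^2\bm{w}$ simply replaces each disaggregated block $\{w_p: p\in V_a^k\}$ by its mean $\bar{w}_k := d_k^{-1}\sum_{p\in V_a^k} w_p$ and leaves the exterior entries $w_i$, $i\in V^0$, untouched. I would then expand both energies edge-by-edge over the partition $E_D = E_D^0\cup E_D^1\cup E_D^2\cup\bigcup_k E_a^k$ and write $\|\widetilde{\bm{v}}\|^2_{\widetilde{A}_D} = S_0 + S_1 + S_2 + S_{\mathrm{int}}$ for the four corresponding sums of $\omega_e(w_i - w_j)^2$. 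Because the internal edges $E_a^k$ collapse to self-loops under aggregation and the identity $\sum_{j\in V_a^k}\omega_{(i,j)} = \omega_{(i,k)}$ reproduces the original-graph weights, the same expansion gives
\[
\|\widetilde{P}^T\widetilde{\bm{v}}\|_A^2 = S_0 + \sum_{\substack{e=(i,j)\in E_D^1\\ j\in V_a^k}}\omega_e(w_i - \bar{w}_k)^2 + \sum_{\substack{e=(i,j)\in E_D^2\\ i\in V_a^k,\ j\in V_a^\ell}}\omega_e(\bar{w}_k - \bar{w}_\ell)^2,
\]
so the $E_D^0$ part matches on both sides and needs no further estimate.

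Next I would separate, on each boundary edge, the genuine difference from the block fluctuation. For $E_D^1$ I use the elementary bound $(a+b)^2\le(1+\epsilon)a^2+(1+\epsilon^{-1})b^2$ with $a = w_i - w_j$ and $b = w_j - \bar{w}_k$; for $E_D^2$ I split $\bar{w}_k - \bar{w}_\ell = (\bar{w}_k - w_i)+(w_i - w_j)+(w_j - \bar{w}_\ell)$ and combine the same inequality with $(x+y)^2\le 2x^2+2y^2$, which is exactly what produces the factor $2$ in \eqref{def:weight}. This bounds $\|\widetilde{P}^T\widetilde{\bm{v}}\|_A^2$ by $(1+\epsilon)(S_0+S_1+S_2)$ plus a collection of fluctuation terms $\omega_e(w_p - \bar{w}_k)^2$, each charged to the block $k$ that contains the endpoint $p$ — an $E_D^1$ edge charging one block and an $E_D^2$ edge charging both of its blocks, whence the $2$.

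The crux is to absorb these fluctuations into the internal energy $S_{\mathrm{int}}$. Here I invoke \eqref{eqn:taylor-dis-k}: since $w_p - \bar{w}_k = d_k^{1/2}(\widetilde{v}_p - \bar{\widetilde{v}}_k)$ with $\bar{\widetilde v}_k = d_k^{-1}\sum_{p\in V_a^k}\widetilde v_p$, Cauchy--Schwarz in the $L_k$-semi-inner product yields $(w_p - \bar{w}_k)^2\le d_k\,W_k^p\,\|\widetilde{\bm{v}}_k\|^2_{L_k}$. Summing the fluctuations charged to a fixed block $k$ then produces $d_k\,\|\widetilde{\bm{v}}_k\|^2_{L_k}$ times precisely $W_{e'}$, the threshold in \eqref{def:weight}, which is common to all internal edges of that block. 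On the other hand, the internal energy of block $k$ in $\widetilde{A}_D$ equals $d_k\sum_{e'\in E_a^k}\omega_{e'}(\widetilde{v}_p-\widetilde{v}_q)^2 \ge d_k\,W_{e'}\,\|\widetilde{\bm{v}}_k\|^2_{L_k}$ as soon as $\omega_{e'}\ge W_{e'}$, so each block's fluctuations are dominated by its own internal energy — the two factors $d_k$ cancel, which is why the threshold in \eqref{def:weight} carries no $d_k$. Summing over $k$ gives $\|\widetilde{P}^T\widetilde{\bm{v}}\|_A^2 \le (1+\epsilon)(S_0+S_1+S_2)+S_{\mathrm{int}}\le (1+\epsilon)\|\widetilde{\bm{v}}\|^2_{\widetilde{A}_D}$, which is \eqref{ine:upper-epsilon}. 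I expect the only real difficulty to be the bookkeeping: keeping the block-charging of the $E_D^2$ edges consistent and confirming that the edge-by-edge aggregation genuinely reconstructs the original-graph energy $\|\cdot\|_A$.
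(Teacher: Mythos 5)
Your proposal is correct and follows essentially the same route as the paper's proof: the same partition $E_D = E_D^0 \cup E_D^1 \cup E_D^2 \cup \bigcup_k E_a^k$, the same use of the identity \eqref{eqn:taylor-dis-k} from Lemma~\ref{taylor} with Cauchy--Schwarz in the $L_k$-seminorm, the same Young inequality $(a+b)^2 \le (1+\epsilon)a^2 + (1+\epsilon^{-1})b^2$ producing the factor $2$ on $E_D^2$ edges, and the same absorption of the fluctuation terms into the internal energy via $\omega_{e'} \ge W_{e'}$. The only difference is presentational: your explicit substitution $\bm{w} = D_s^{-1}\widetilde{\bm{v}}$ makes the $D_s$-scaling and the cancellation of the $d_k$ factors transparent, bookkeeping the paper carries out implicitly in unscaled variables.
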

\begin{proof}
  We denote $\widetilde{\bm{u}} = \widetilde{P}^T \widetilde{P} \widetilde{\bm{v}}$, and we have,
\begin{eqnarray*}
\| \widetilde{P}^T \widetilde{\bm{v}} \|_A^2  & = & \langle \widetilde{A}_D \widetilde{\bm{u}}, \widetilde{\bm{u}} \rangle  = \sum_{e=(i,j) \in E_D} \omega_e ( \widetilde{u}_i - \widetilde{u}_j )^2 \\
& = & \sum_{e=(i,j)  \in E_D^0} \omega_e  ( \widetilde{u}_i - \widetilde{u}_j )^2 + \sum_{k=1}^m \sum_{e=(i,j)  \in E_a^k} \omega_e  ( \widetilde{u}_i - \widetilde{u}_j )^2  \\
&& + \sum_{e=(i,j)  \in E_D^1} \omega_e  ( \widetilde{u}_i - \widetilde{u}_j )^2 + \sum_{e=(i,j)  \in E_D^2} \omega_e  \left( \widetilde{u}_i - \widetilde{u}_j \right)^2 \\
& =: & I_0 + I_1 + I_2.
\end{eqnarray*} 
Here, we have set
$I_0=\sum_{e=(i,j)  \in E_D^0} \omega_e  \left( \widetilde{v}_i - \widetilde{v}_j \right)^2$, 
\[
I_1=\sum_{k=1}^m \sum_{\substack{e = (i,j) \in E_D^1 \\ i\in V^0, \ j\in V_a^k}} \omega_e  \left( \widetilde{v}_i -  \frac{1}{d_k}\sum_{p \in V_a^k} \widetilde{v}_p \right)^2,
\]
and 
\begin{eqnarray*} 
I_2&=& \sum_{k=1}^m \sum_{\ell=1}^m  \sum_{\substack{e=(i,j) \in E_D^2 \\ i \in V_a^k, \ j \in V_a^\ell}} \omega_e  \left( \frac{1}{d_k}\sum_{p \in V_a^k} \widetilde{v}_p - \frac{1}{d_\ell}\sum_{q \in V_a^\ell} \widetilde{v}_q \right)^2.
\end{eqnarray*}

Next, we estimate $I_1$ and $I_2$ on the right-hand side. For
$e=(i,j) \in E_D^1$, $i \in V^0$ and $j \in V_a^k$, using
\eqref{eqn:taylor-dis-k}, we have
\begin{align*}
\left( \widetilde{v}_i - \frac{1}{d_k}\sum_{p \in V_a^k} \widetilde{v}_p \right)^2 & = \left(  \widetilde{v}_i - \widetilde{v}_j -  \frac{1}{d_k} \langle L_{k,j}^{-1} \bm{1}_{d_k}, L_k \widetilde{\bm{v}}_k \rangle  \right)^2 \\
& \leq (1+ \epsilon) \left( \widetilde{v}_i - \widetilde{v}_j \right)^2 + \left( 1 + \epsilon^{-1} \right) \frac{1}{d_k^2} \| L_{k,j}^{-1} \bm{1}_{d_k} \|^2_{L_k} \| \widetilde{\bm{v}}_k \|^2_{L_k} \\
& = (1+ \epsilon) \left( \widetilde{v}_i - \widetilde{v}_j \right)^2 + \sum_{e'=(p,q) \in E_a^k} \left[ \left( 1+ \epsilon^{-1} \right) W_k^j  \right] \left( \widetilde{v}_p - \widetilde{v}_q \right)^2.
\end{align*}
Then
\begin{eqnarray*}
I_1 & \leq &\sum_{k=1}^m \sum_{\substack{e = (i,j) \in E_D^1 \\ i\in V^0, \ j\in V_a^k}} \omega_e \left\{ (1+ \epsilon) \left( \widetilde{v}_i - \widetilde{v}_j \right)^2 + \sum_{ e'=(p,q) \in E_a^k} \left[ \left( 1+ \epsilon^{-1} \right) W_k^j  \right] \left( \widetilde{v}_p - \widetilde{v}_q \right)^2 \right \} \\
& = &\left( 1 + \epsilon \right) \sum_{k=1}^m\sum_{\substack{e = (i,j) \in E_D^1 \\ i\in V^0, \ j\in V_a^k}} \omega_e \left( \widetilde{v}_i - \widetilde{v}_j \right)^2 \\
&& + \sum_{k=1}^m \sum_{e'=(p,q) \in E_a^k} \left[  (1+\epsilon^{-1}) \sum_{\substack{e = (i,j) \in E_D^1 \\ i\in V^0, \ j\in V_a^k}}  \omega_e W_k^j  \right] \left( \widetilde{v}_p - \widetilde{v}_q \right)^2.
\end{eqnarray*}
Next, using \eqref{eqn:taylor-dis-k}, 
for $e = (i,j) \in E^2_D$, $i \in V_a^k$ and $j \in V_a^\ell$
we have
\begin{align*}
\left( \frac{1}{d_k}\sum_{p \in V_a^k} \widetilde{v}_p  - \frac{1}{d_\ell}\sum_{q \in V_a^\ell} \widetilde{v}_q  \right)^2 &= \left(  \widetilde{v}_i - \widetilde{v}_j +  \frac{1}{d_k} \langle 
L_{k,i}^{-1} \bm{1}_{d_k}, L_k \widetilde{\bm{v}}_k \rangle  - \frac{1}{d_\ell} \langle L_{\ell,j}^{-1} \bm{1}_{d_\ell}, L_\ell \widetilde{\bm{v}}_\ell \rangle      \right)^2 \\
& \leq (1+\epsilon ) \left(  \widetilde{v}_i - \widetilde{v}_j \right)^2 + 2(1+\epsilon^{-1}) \frac{1}{d_k^2} \| L_{k,i}^{-1} \bm{1}_{d_k} \|^2_{L_k}  \| \widetilde{\bm{v}}_k \|^2_{L_k} \\
& \quad + 2(1+\epsilon^{-1}) \frac{1}{d^2_\ell} \| L_{\ell,j}^{-1} \bm{1}_{d_\ell} \|_{L_{\ell}} \| \widetilde{\bm{v}}_{\ell} \|_{L_\ell}^2 \\
& = (1+\epsilon ) \left(  \widetilde{v}_i - \widetilde{v}_j \right)^2 + \sum_{e'=(p,q) \in E_a^k} \left[ 2\left( 1+ \epsilon^{-1} \right) W_k^i  \right] \left( \widetilde{v}_p - \widetilde{v}_q \right)^2 \\
& \quad + \sum_{e'=(p,q) \in E_a^\ell} \left[2 \left( 1+ \epsilon^{-1} \right) W_\ell^j  \right] \left( \widetilde{v}_p - \widetilde{v}_q \right)^2.
\end{align*}
Then 
\begin{eqnarray*}
I_2 &\leq &
\sum_{k=1}^m \sum_{\ell=1}^m \sum_{\substack{e=(i,j) \in E_D^2 \\ i \in V_a^k, \ j \in V_a^\ell}} \omega_e \left \{  (1+\epsilon ) \left(  \widetilde{v}_i - \widetilde{v}_j \right)^2 + \sum_{e'=(p,q) \in E_a^k} \left[ 2\left( 1+ \epsilon^{-1} \right) W_k^i  \right] \left( \widetilde{v}_p - \widetilde{v}_q \right)^2 \right. \\
&& \left.  +  \sum_{e'=(p,q) \in E_a^\ell} \left[2 \left( 1+ \epsilon^{-1} \right) W_\ell^j  \right] \left( \widetilde{v}_p - \widetilde{v}_q \right)^2  \right\}.
\end{eqnarray*}
Therefore, we have
\begin{eqnarray*}
I_2&\le&  (1+\epsilon) \sum_{k=1}^{m} \sum_{\ell=1}^{m} \sum_{\substack{e=(i,j) \in E_D^2 \\ i \in V_a^k, \ j \in V_a^\ell}} \omega_e \left(  \widetilde{v}_i - \widetilde{v}_j \right)^2  \\
&&+ \sum_{k=1}^m \sum_{e'=(p,q) \in E_a^k} \left[  \sum_{l=1}^m \sum_{\substack{e=(i,j) \in E_D^2 \\ i \in V_a^k, \ j \in V_a^\ell}} 2(1+\epsilon^{-1})\omega_e W_k^i \right]  \left( \widetilde{v}_p - \widetilde{v}_q \right)^2  \\
&& + \sum_{\ell=1}^m \sum_{e'=(p,q) \in E_a^\ell} \left[  \sum_{k=1}^m \sum_{\substack{e=(i,j) \in E_D^2 \\ i \in V_a^k, \ j \in V_a^\ell}} 2 (1+\epsilon^{-1}) \omega_e W_\ell^j  \right]  \left( \widetilde{v}_p - \widetilde{v}_q \right)^2.
\end{eqnarray*}
Hence, 
\begin{eqnarray*}
I_2& \le & (1+\epsilon) \sum_{k=1}^{m} \sum_{\ell=1}^{m} \sum_{\substack{e=(i,j) \in E_D^2 \\ i \in V_a^k, \ j \in V_a^\ell}} \omega_e \left(  \widetilde{v}_i - \widetilde{v}_j \right)^2  \\
& & + \sum_{k=1}^{m} \sum_{e'=(p,q)\in E_a^k} \left[ 2 (1+\epsilon^{-1}) \left(  \sum_{\ell=1}^m \sum_{\substack{e=(i,j)\in E_D^2 \\  i \in V_a^k, \ j \in V_a^\ell }} \omega_e W_k^i  + \sum_{\ell=1}^{m} \sum_{\substack{e=(i,j)\in E_D^2 \\  i \in V_a^\ell, \ j \in V_a^k }} \omega_e W_k^j  \right)   \right] \left( \widetilde{v}_p - \widetilde{v}_q \right)^2.
\end{eqnarray*} 

Now, we use the definition of $W_{e'}$ \eqref{def:weight} and the estimates on $I_1$ and $I_2$ 
to obtain that
\begin{eqnarray*}
&& \| \widetilde{P}^T \widetilde{\bm{v}} \|_A^2  \leq \sum_{e \in E_D^0} \omega_e \left(  \widetilde{v}_i - \widetilde{v}_j \right)^2  + \left( 1 + \epsilon \right) \sum_{k=1}^m\sum_{\substack{e = (i,j) \in E_D^1 \\ i\in V^0, \ j\in V_a^k}} \omega_e \left( \widetilde{v}_i - \widetilde{v}_j \right)^2  \\
&&\quad + (1+\epsilon) \sum_{k=1}^{m} \sum_{\ell=1}^{m} \sum_{\substack{e=(i,j) \in E_D^2 \\ i \in V_a^k, \ j \in V_a^\ell}} \omega_e \left(  \widetilde{v}_i - \widetilde{v}_j \right)^2  +  \sum_{k=1}^{m} \sum_{e'=(p,q)\in E_a^k} W_{e'} \left( \widetilde{v}_p - \widetilde{v}_q \right)^2.
\end{eqnarray*}
Due to \eqref{def:weight}, we have that $\omega_{e'} \geq W_{e'}$ 
and \eqref{ine:upper-epsilon} follows. This completes the proof. 
\end{proof}

Lemma \ref{lem:upper-c1} shows that the constant $c_1$ can be made
arbitrarily close to $1$ if the weights on the internal edges of the
disaggregation are chosen to be large enough.  As an immediate
consequence, we have the following theorem for the preconditioner $B$.
\begin{theorem}\label{thm:prec-B}
  Under the assumptions of Corollary \ref{coro:prec-B} and
  Lemma~\ref{lem:upper-c1}, for the preconditioner
  $B = \widetilde{P}^T \widetilde{B}_D \widetilde{P}$, we have
\begin{equation} \label{ine:prec-B-cond}
\kappa(BA) \leq \left( 1 + \epsilon \right) \kappa(\widetilde{B}_D \widetilde{A}_D).
\end{equation}
\end{theorem}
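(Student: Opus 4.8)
The plan is to deduce the bound directly from Corollary~\ref{coro:prec-B} by supplying the two constants $c_0$ and $c_1$ that appear in its hypotheses \eqref{ine:lower-c0} and \eqref{ine:upper-c1}. The corollary already delivers the fictitious-space estimate $\kappa(BA) \leq (c_1/c_0)^2 \kappa(\widetilde{B}_D \widetilde{A}_D)$, so the entire task reduces to identifying $c_0$ and $c_1$ and substituting.

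First I would record that the lower-bound condition \eqref{ine:lower-c0} holds with $c_0 = 1$, exactly as established in the discussion preceding the theorem: the identity \eqref{eqn:lower-c0} gives $\|\widetilde{\bm{v}}\|_{\widetilde{A}_D} = \|\bm{v}\|_A$ for the prolongated vector, so \eqref{ine:lower-c0} is satisfied with equality and $c_0 = 1$. Next I would read the upper-bound constant off Lemma~\ref{lem:upper-c1}: under the weight hypothesis \eqref{def:weight}, the lemma yields $\|\widetilde{P}^T \widetilde{\bm{v}}\|_A^2 \leq (1+\epsilon)\|\widetilde{\bm{v}}\|_{\widetilde{A}_D}^2$ for every $\widetilde{v}$, which is precisely \eqref{ine:upper-c1} with $c_1^2 = 1+\epsilon$.

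Substituting $c_0 = 1$ and $c_1^2 = 1+\epsilon$ into Corollary~\ref{coro:prec-B} then gives $\kappa(BA) \leq (1+\epsilon)\kappa(\widetilde{B}_D \widetilde{A}_D)$, which is \eqref{ine:prec-B-cond}. I do not expect any genuine obstacle in this step, since all of the analytic work has already been carried out in Lemma~\ref{lem:upper-c1}, whose proof charges the cross-subgraph coupling terms $I_1$ and $I_2$ to the internal edges of each disaggregated subgraph through the Taylor-type identity of Lemma~\ref{taylor}. The only care required here is bookkeeping: confirming that the square in $(c_1/c_0)^2$ correctly converts the squared-norm inequality of Lemma~\ref{lem:upper-c1} into the factor $(1+\epsilon)$, and noting that $c_1$ — and hence the condition number bound — can be driven arbitrarily close to $\kappa(\widetilde{B}_D\widetilde{A}_D)$ by taking the internal weights large, i.e.\ letting $\epsilon \to 0$ in \eqref{def:weight}.
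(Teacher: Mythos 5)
Your proof is correct and is essentially identical to the paper's own argument: the paper likewise observes that $c_0 = 1$ follows from \eqref{eqn:lower-c0}, that Lemma~\ref{lem:upper-c1} gives $c_1 = (1+\epsilon)^{1/2}$, and then substitutes these into Corollary~\ref{coro:prec-B} to obtain \eqref{ine:prec-B-cond}. No gaps here; the only analytic content lives in Lemma~\ref{lem:upper-c1}, exactly as you note.
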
 
\begin{proof}
  The relation~\eqref{ine:prec-B-cond} follows from
  Corollary~\ref{coro:prec-B} since $c_0 = 1$ in~\eqref{eqn:lower-c0}
  and $c_1 = (1+\epsilon)^{1/2}$ in Lemma~\ref{lem:upper-c1}.
\end{proof}

Finally, since $\widetilde{A}_D:= D_s^{-1} A_D D_s^{-1}$, if we have a
preconditioner $B_D$ for $A_D$ and define
$\widetilde{B}_D = D_s B_D D_s$, then it is easy to verify that
$\kappa(\widetilde{B}_D\widetilde{A}_D) = \kappa(B_DA_D)$. We have the
following theorem showing that the preconditioned operator $BA$ has a
condition number comparable to the condition number of $B_DA_D$.
\begin{theorem}\label{thm:prec-B1}
Under the assumptions of Corollary \ref{coro:prec-B} and Lemma \ref{lem:upper-c1} and let $\widetilde{B}_D = D_s B_D D_s$, for the preconditioner $B = \widetilde{P}^T \widetilde{B}_D \widetilde{P}$, we have
 \begin{equation} \label{ine:prec-B1-cond}
\kappa(BA) \leq \left( 1 + \epsilon \right) \kappa(B_D A_D).
\end{equation}
\end{theorem}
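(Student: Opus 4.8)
The plan is to reduce the claim entirely to Theorem~\ref{thm:prec-B} by observing that the diagonal rescaling relating $\widetilde{A}_D$ to $A_D$ and $\widetilde{B}_D$ to $B_D$ acts as a similarity transformation on the preconditioned operator, and hence preserves its condition number. Thus the whole argument is a one-line algebraic cancellation followed by an invocation of the already-proved bound.

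First I would substitute the definitions $\widetilde{A}_D = D_s^{-1} A_D D_s^{-1}$ and $\widetilde{B}_D = D_s B_D D_s$ directly into the product $\widetilde{B}_D \widetilde{A}_D$. The two adjacent inner factors of $D_s$ cancel, yielding
$$\widetilde{B}_D \widetilde{A}_D = D_s B_D D_s \, D_s^{-1} A_D D_s^{-1} = D_s (B_D A_D) D_s^{-1}.$$
Since $D_s$ is diagonal with strictly positive entries by~\eqref{def:D_s}, it is invertible, so this identity exhibits $\widetilde{B}_D \widetilde{A}_D$ as a genuine similarity transform of $B_D A_D$. Similar matrices share the same spectrum, and because the condition number of these preconditioned operators is the ratio of the largest to the smallest nonzero eigenvalue, it is invariant under similarity; we conclude $\kappa(\widetilde{B}_D \widetilde{A}_D) = \kappa(B_D A_D)$.

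It then remains only to chain this equality with the bound established in Theorem~\ref{thm:prec-B}, namely $\kappa(BA) \le (1+\epsilon)\,\kappa(\widetilde{B}_D \widetilde{A}_D)$, which immediately gives~\eqref{ine:prec-B1-cond}. There is no substantive obstacle in this argument; the only point deserving a moment's attention is to confirm that the relevant condition number is spectral (the ratio of extreme nonzero eigenvalues), so that invariance under the \emph{nonorthogonal} similarity by $D_s$ is legitimate, and to note that $D_s$ maps the orthogonal complement of the common null space bijectively onto itself so that the nonzero spectra coincide exactly. Once this is in place, the equality $\kappa(\widetilde{B}_D\widetilde{A}_D)=\kappa(B_DA_D)$ holds without further hypotheses, and the theorem follows.
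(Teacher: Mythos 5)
Your proposal is correct and follows the same route as the paper: the paper likewise deduces the bound by combining Theorem~\ref{thm:prec-B} with the identity $\kappa(\widetilde{B}_D\widetilde{A}_D)=\kappa(B_DA_D)$, which it states is ``easy to verify.'' Your explicit verification via the similarity $\widetilde{B}_D\widetilde{A}_D = D_s(B_DA_D)D_s^{-1}$ simply fills in that asserted step.
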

\begin{proof}
\eqref{ine:prec-B1-cond} follows from Theorem \ref{thm:prec-B} and the fact that $\kappa(\widetilde{B}_D\widetilde{A}_D) = \kappa(B_DA_D)$.
\end{proof}
Clearly, Theorems \ref{ine:prec-B-cond} and \ref{ine:prec-B1-cond} imply that,
when the weights on the internal edges of the disaggregation are chosen
to be large enough, preconditioners for disaggregated graph provide
effective preconditioners for the original graph, which indirectly
supports the technique suggested in~\cite{Kuhlemann.V;Vassilevski.P2013a}.

\section*{Acknowledgements}

The authors would like to thank Louisa Thomas for improving the style
of the presentation. The work of Ludmil Zikatanov was supported in
part by the National Science Foundation under grants DMS-1418843 and
DMS-1522615 and by the Department of Mathematics at Tufts University.

%
%

\bibliographystyle{plain}
\bibliography{disagg_bib} 

\end{document}